\newcommand{\RR}{\mathbb{R}} 
\newcommand{\ZZ}{\mathbb{Z}} 
\newcommand{\CC}{\mathbb{C}} 
\newcommand{\EE}{\mathbb{E}} 
\newcommand{\LL}{\mathcal{L}}
\newcommand{\NN}{\mathcal{N}}
\newcommand{\XX}{\mathbf{X}} 
\newcommand{\YY}{\mathbf{Y}} 
\newcommand{\ZZZ}{\mathbf{Z}} 
\newcommand{\de}{\mathrm{d}} 
\newcommand{\e}[1]{\mathrm{e}^{#1}} 
\newcommand{\ii}{\mathrm{i}} 
\newcommand{\tr}{\mathrm{tr}} 
\newcommand{\supp}{\mathrm{supp}} 
\newcommand{\abs}[1]{\left| #1\right|}
\newcommand{\norm}[1]{\left\| #1\right\|}
\newcommand{\scal}[2]{\left\langle#1,#2\right\rangle}
\newcommand{\floor}[1]{\left\lfloor#1\right\rfloor}
\newcommand{\dprod}{\displaystyle\prod}
\newcommand{\dint}{\displaystyle\int}
\newcommand{\I}[2]{\left[#1{,}#2\right]}
\newcommand{\DPP}{\mathrm{DPP}}
\newtheorem{theorem}{Theorem}[section]
\newtheorem{proposition}{Proposition}[section]
\newtheorem{lemma}{Lemma}[section]
\newenvironment{changemargin}[2]{\begin{list}{}{%
\setlength{\topsep}{0pt}%
\setlength{\leftmargin}{0pt}%
\setlength{\rightmargin}{0pt}%
\setlength{\listparindent}{\parindent}%
\setlength{\itemindent}{\parindent}%
\setlength{\parsep}{0pt plus 1pt}%
\addtolength{\leftmargin}{#1}%
\addtolength{\rightmargin}{#2}%
}\item }{\end{list}}
\newcommand{\rev}[1]{\textcolor{black}{#1}}
\begin{document}

%
%
%
%

\title{Projections of determinantal point processes}
\author{Adrien Mazoyer, Jean-François Coeurjolly and Pierre-Olivier Amblard}

\maketitle

\begin{abstract}
	Let $\mathbf x=\{x^{(1)},\dots,x^{(n)}\}$ be a space filling-design of $n$ points defined in $\I{0}{1}^d$. In computer experiments, an important property seeked for $\mathbf x$ is a nice coverage of $\I{0}{1}^d$. This property could be desirable  as well as for any  projection of $\mathbf x$ onto  $\I{0}{1}^\iota$ for $\iota<d$ . Thus we expect that $\mathbf x_I=\{x_I^{(1)},\dots,x_I^{(n)}\}$, which represents the design $\mathbf x$ with coordinates associated to any index set $I\subseteq\{1,\dots,d\}$, remains regular in $\I{0}{1}^\iota$ where $\iota$ is the cardinality of $I$. This paper examines the conservation of nice coverage by projection using spatial point processes,  and more specifically using the class of determinantal point processes. We provide necessary conditions on the kernel defining these processes, ensuring that  the projected point process $\XX_I$ is repulsive, in the sense that its pair correlation function is uniformly bounded by~1, for all $I\subseteq\{1,\dots,d\}$. We present a few examples, compare them using a new normalized version of  Ripley's function. Finally, we illustrate the interest of this research for Monte-Carlo integration.
\end{abstract}

%

\section*{Introduction}
	
Space-filling designs, e.g. Latin hypercubes~\cite{McKayetal79,Owen92}, low discrepancy sequences~\cite[e.g.][]{Halton64,Sobol67}, are popular methods in computer experiments. These computational methods are becoming unavoidable to simulate complex phenomena \cite[e.g.][Chapter 5]{Santneretal13}.
 A space-filling design corresponds to a set $\mathbf x=\{x^{(1)},\dots,x^{(n)}\}$ of $n$ points generated in a bounded domain,  for instance $\I{0}{1}^d$ in the following. Usually, the dimension $d$ represents the number of factors (or covariates) on which the numerical code depends. The $k$th coordinates of points from $\mathbf x$  then represent the values of the $k$th factor.  
Intuitively, points issued from a space filling-design tend to regularly cover the domain $\I{0}{1}^d$. The quality of this coverage  can be a priori evaluated by standard criteria such as maximin distance or $L^2$ discrepancy \cite[see e.g.][]{Owen13}.

Frequently in computer  experiments, some factors are a posteriori found to be inactive \cite[see][and references therein]{Sunetal19}. 
 If the experiment is to be performed again, an inactive factor must be discarded to avoid numerical errors and to decrease complexity. 
But if $k$ factors are discarded, the experimental space-filling design should be done again, this time on $\I{0}{1}^{d-k}$. This induces a new complexity and is expensive. A cheaper strategy is to keep the first space-filling design, but use its projection onto $\I{0}{1}^{d-k}$ by discarding the adequate factors (or coordinates). 
 However,  the projected points should  provide a good coverage of  $\I{0}{1}^{d-k}$.  Therefore, an additional property of the initial space-filling design should be the conservation of the ``nice coverage" property for any subsets of  the coordinates.

This additional property has already been considered in the literature for low discrepancy type designs \cite[see e.g.][]{Sunetal19}.
Our work in contrast considers spatial point processes as experimental designs. 
For the question we address, we set $\mathbf x_I=\{x_I^{(1)},\dots,x_I^{(n)}\}$ to be the design obtained from the design $\mathbf x$ by keeping the factors (coordinates) indexed by the index set $I\subseteq \{1,\dots,d\}$.
For example,  when $I=\{1,\dots,d-1\}$, $\mathbf x_I$ corresponds to the set $\mathbf x$ where the $d$th coordinate of each point is discarded. We let $\XX$ to be the spatial point process generating $\mathbf x$ and $\XX_I$ the process generating $\mathbf x_I$. In the spatial statistics literature, the pair correlation function (denoted by $g$) is the most standard way for characterizing the pairwise dependence between points, see e.g. \cite{MollerWaagepeterson04}: $g_\XX(x,y)$ measures the probability to observe a pair of distinct points at $(x,y)$, normalized by the same probability under the Poisson case, i.e. under the situation where there is no interaction between points (see Section~\ref{sec:backgr} for a more formal definition of the pair correlation function). 
A point process for which $g_\XX<1$, i.e. $g_\XX(x,y)<1$ for all $x,y \in \I{0}{1}^d$ is qualified as a repulsive point process in \cite{MollerWaagepeterson04,Illianetal08}.
Thanks to repulsiveness, points of a repulsive point process tend to cover more regularly the space than a Poisson point process does.
For the application to computer experiments which motivates our study, we  intend to develop point process models which are repulsive in all directions, i.e. spatial point processes $\XX$ such that $g_{\XX_I}<1$ for all $I\subseteq \{1,\dots,d\}$.

Several classes of spatial point processes are able to generate regular patterns. Among them,  Mat\'ern hard-core processes \cite{Teichmannetal13}, Gibbs point processes \cite{MollerWaagepeterson04,Dereudre19} or determinantal point processes \cite{Lavancieretal15_1} are appealing for many applications. In particular Gibbs point processes have been considered to generate space-filling designs in~\cite{Francoetal08, Dupuyetal15}. The authors build a specific Gibbs model by parameterizing its Papangelou conditional intensity as a Strauss hard-core model with constraints on the marginals. The resulting patterns  look regular and the points cover regularly the space. However, Gibbs point processes have the drawback of not having their moments available in a closed form. In particular, the intensity as well as the pair correlation function \cite[see e.g.][]{Dereudre19} are not available analytically. Even worse, using Monte-Carlo simulations in \cite{Illianetal08} show that the pair correlation function of Strauss hard-core models are not uniformly bounded by~1 . Although Mat\'ern hard-core processes are more tractable, their  pair correlation function suffer from the same problem \cite[see][]{Teichmannetal13}.

Determinantal point processes (DPPs for short) have been introduced in \cite{Macchi75} as ``fermion'' processes to model the position of particles that repel each other. This class of processes is known for very appealing properties, in particular for its tractability: explicit expressions for the intensity functions are available. Therefore, a growing attention has been paid to DPPs from a theoretical point of view \cite[e.g.][]{Soshnikov00, ShiraiTakahashi03,Houghetal09,Decreusefondetal16}, and more recently in the statistics community \cite{Lavancieretal15_1,BardenetHardy20}. In particular, one of the main characteristics of a DPP is that, by construction, its pair correlation function is uniformly bounded by~1.
DPPs  are defined through a kernel $K:B\times B \to \mathbb C$ which characterizes the distribution of $\XX$ and thus which characterizes also its moments.
The main result of this paper 
concerns necessary conditions (expressed by Assumption~\eqref{hyp:sep_kern}) on the form of  kernel $K$ 
to ensure that the projected pattern $\XX_I$ remains repulsive, i.e. such that $g_{\XX_I}<1$.

The  paper is organized as follows. Section~\ref{sec:backgr} contains a brief background on spatial point processes and in particular on DPPs. Section~\ref{sec:DPPproj} deals with 
the statistical description of the projected point process $\XX_I$. In particular we provide a closed form for the pair correlation $g_{\XX_I}$ when $\XX$ is a DPP defined on $\I{0}{1}^d$ with kernel $K$ satisfying a separability assumption.
Examples of models satisfying this 
separability condition are presented and discussed in Section~\ref{sec:ex}. They are compared using an original summary statistic, defined as a normalized version of  Ripley's function \cite[see e.g.][]{MollerWaagepeterson04} based on the sup norm.
We illustrate in Section~\ref{sec:app} the interest of the models developed in this research. To mimic situations which occur in computer experiments, we consider the Monte-Carlo integration for  $\int_{\I{0}{1}^\iota} f_I(u)\mathrm d u$, for any function $f_I:\I{0}{1}^\iota \to \mathbb R$ and any $I\subseteq \{1,\dots,d\}$ with cardinality $\iota=1,\dots,d$. 
We demonstrate that the single initial design defined on $\I{0}{1}^d$ and its projections can be used to achieve this task efficiently. Proofs of our results are postponed to appendices.


\section{Background and notation} \label{sec:backgr}

\subsection{Spatial point processes}

A spatial point process~$\XX$ defined on a Borel set~$B\subseteq\RR^d$ is a locally finite measure on $B$, (for measure theoretical details, see e.g. \cite{MollerWaagepeterson04} and references therein) whose realization is of the form 
$\{x^{(1)}, \ldots , x^{(k)}\}\in B^k$
where~$k$ is the realization of a random variable and the~$x^{(i)}$'s represent the events. We assume that $\XX$ is simple meaning that two events cannot occur at the same location. Thus, $\XX$ is viewed as a locally finite random set.

In most cases, the distribution of a point process $\XX$ can be described by its intensity functions {$\rho_\XX^{(k)}:B^k\rightarrow \RR^+$, $k\in\mathbb{N} \setminus \{0\}$. By Campbell Theorem \cite[e.g.][]{MollerWaagepeterson04}, $\rho_\XX^{(k)}$ is characterized by the following integral representation: for any non-negative measurable function $h:B^k \to \RR^+$  
\begin{align}
\EE\Bigg[&\sum_{x^{(1)}, \ldots , x^{(k)} \in \XX}^{\neq} h\left(x^{(1)}, \ldots , x^{(k)}\right)\Bigg] \nonumber\\
&	 = \int_{B^k} \rho_\XX^{(k)}\left(x^{(1)},\ldots,x^{(k)}\right) h\left(x^{(1)}, \ldots , x^{(k)}\right) \de x^{(1)}\ldots\de x^{(k)}\label{eq:rhok_def}		
\end{align}  
where~$\neq$ over the summation means that~$x^{(1)}, \ldots , x^{(k)}~$ are pairwise distinct points. Intuitively, for any pairwise distinct points $x^{(1)},\ldots,x^{(k)}\in B$, \linebreak$\rho_\XX^{(k)}\left(x^{(1)},\ldots,x^{(k)}\right)\de x^{(1)}\ldots\de x^{(k)}$ is the probability that $\XX$ has a point in each~of the $k$ infinitesimally small sets around~$x^{(1)},\ldots,x^{(k)}$ with volumes $\de x^{(1)},\ldots,\de x^{(k)}$, respectively. When $k=1$, this yields the intensity function and we simply denote it by $\rho_{\XX}=\rho_{\XX}^{(1)}$.
The second order intensity~$\rho_\XX^{(2)}$ is used to define the pair correlation function
	\begin{equation} \label{eq:pcf_def}
		g_\XX(x^{(1)},x^{(2)}) = \frac{\rho_\XX^{(2)}(x^{(1)},x^{(2)})}{\rho_\XX(x^{(1)})\rho_\XX(x^{(2)})}\,
 	\end{equation}
	for pairwise distinct $x^{(1)}$, $x^{(2)} \in B$,  and
	where $g_\XX(x^{(1)},x^{(2)})$ is set to 0 if~$\rho_\XX(x^{(1)})$ or~$\rho_\XX(x^{(2)})$ is zero.
	 By convention, $\rho_\XX^{(k)}\left(x^{(1)},\ldots,x^{(k)}\right)$ is set to 0 if~$x^{(i)}=x^{(j)}$ for some~$i\neq j$. 
	Therefore~$g_\XX(x,x)$ is also set to 0 for all~$x\in B$. The pair correlation function (pcf for short) can be used to determine the local interaction between points of~$\XX$~located at $x$ and $y$:~$g_\XX(x,y) > 1$ characterizes positive correlation between the points;~$g_\XX(x,y)=1$ means there is no interaction (typically a Poisson point process);~$g_\XX(x,y) < 1$ characterizes negative correlations. A point pattern is often referred to as a repulsive point process, if $g(x,y)<1$ for any $x,y\in B$ \cite[e.g.][Sec. 6.5]{Illianetal08}.

A point process $\XX$ with constant intensity function on $B$ is said to be homogeneous. A pcf with constant intensity is said to be invariant by translation (resp. isotropic) if $\rho_\XX^{(2)}(x^{(1)},x^{(2)})$ depends only on $x^{(2)}-x^{(1)}$ (resp. on $\|x^{(2)}-x^{(1)}\|$ for a norm to be defined).


	\subsection{Determinantal point processes (DPPs)}

In this section, the class of continuous DPPs is introduced. We restrict our attention to DPPs defined on a compact set $B\subset \RR^d$. A point process~$\XX$ on~$B$ is said to be a DPP on $B$ with kernel $K:B\times B \to \mathbb C$ if for any $k\ge 1$ its~$k$th order intensity function is given by
\begin{equation} \label{eq:rhok_DPP}
\rho_\XX^{(k)}\left(x^{(1)},\ldots,x^{(k)}\right) = \det\left[K\left(x^{(i)},x^{(j)}\right)\right]_{i,j=1}^k
\end{equation}
and we simply denote by $\XX\sim \DPP_B(K)$. 
We assume in this work  that $K$ is a continuous covariance function and refer the interested reader to more general situations to~\cite{Houghetal09}.
The intensity of $\XX$ is given by $\rho_\XX(x)=K(x,x)$ and its pcf by
\begin{equation} \label{eq:pcf_DPP}
g_\XX(x,y) = 1-\frac{\abs{K(x,y)}^2}{K(x,x)K(y,y)}.
\end{equation}
The popularity of DPPs relies mainly upon \eqref{eq:rhok_DPP}-\eqref{eq:pcf_DPP}: all moments of $\XX$ are explicit and by assumption on $K$,
$g_\XX(x,y)<1$ for any $x,y\in B$.
From~\eqref{eq:pcf_DPP} and the continuity of~$K$, it is worth mentioning that $g_\XX$ is continuous on the diagonal, i.e. $g_\XX(x,y) \to 0$ when $y\to x$ for any $x\in B$.

From Mercer's Theorem \cite[Sec. 98]{RieszSznagy90},  kernel $K$ admits the following decomposition for any $x,y\in B$
\begin{equation} \label{eq:Mercer}
K(x,y) = \sum_{j\in \NN} \lambda_j \phi_j(x)\overline{\phi_j(y)}
\end{equation}
where $\NN$ is a countable set (e.g. $\mathbb{N},\; \ZZ,\;\ZZ^d,\ldots$), $\{\phi_j\}_{j\in \NN}$ are eigenfunctions associated to $K$ and form an orthonormal basis of the space of square-integrable functions~$L^2(B)$. 
$\{\lambda_j\}_{j\in \NN}$ are the eigenvalues of $K$.
Let us mention that we abuse notation when referring $\phi_j$'s and $\lambda_j$'s to as eigenfunctions and eigenvalues of $K$. These should require to introduce the notion of integral operator with kernel $K$ \cite[e.g.][]{DebnathMikusinski05} acting on $L^2(B)$. To simplify the reading, we make the misnomer to consider  kernel $K$ instead of the associated integral operator.
We define the trace of kernel $K$ on $B$ by
\[
\tr_B(K) = \int_B K(x,x)\de x = \sum_{j\in\NN} \lambda_j.
\]
In the following, the kernels we consider are assumed to have finite trace, and are called trace class kernels.
The existence of a DPP with kernel $K$ is ensured if $K$ is trace class, and is  such that $\lambda_j \le 1$ for any $j\in \NN$ \cite[e.g.][Theorem 4.5.5]{Houghetal09}.

A kernel such that its non-zero eigenvalues are equal to 1 is called a ``projection kernel''. In particular, if $\XX$ is a ``projection DPP'', i.e. $\XX\sim\DPP_B(K)$ where $K$ is a ``projection kernel'', then the number of points of $\XX$ in $B$, is almost surely constant and equal to the trace of $K$. Notice that the name ``projection kernel'' is not related at all with the projection transformation we are studying here. This terminology seems commonly used though \cite[e.g.][]{Houghetal06,Houghetal09, McCullaghMoller06,Lavancieretal15_1}.

The homogeneous case is often considered later. A DPP $\XX$ with kernel $K$ is said to be homogeneous, if $K$ is the restriction on $B\times B$ of a kernel $\bar K$ defined on $\RR^d\times \RR^d$ which is stationary, i.e. satisfies
\[
\bar K(x,y) = \bar K(0,x-y),\quad x,y\in \RR^d.
\]
In that case, we abuse notation, identify $K$ with $\bar K$ and refer to $K$ as a stationary kernel.
It is worth pointing out that if $K$ admits a Mercer's decomposition with respect to the Fourier basis
\begin{equation} \label{eq:fourier_basis}
	\phi_j(x) = \e{2\ii\pi \scal{j}{x}}
\end{equation}
 where $\scal{\cdot}{\cdot}$ denotes the inner product on $\RR^d$,  then $K$ is stationary.

\section{Projection of a spatial point process and applications to DPPs \label{sec:DPPproj}}
	\subsection{Projection of a spatial point process}

In this work, we consider projection of spatial point processes, i.e. keeping a given number of coordinates from the original spatial point process. Such a framework requires that the original point process $\XX$ is defined on a compact set $B \subset \RR^d$: otherwise, the configuration of  points of the projected point processes may not form locally finite configuration, as also noticed in the two-dimensional case in~\cite[p. 17]{Baddeley07}. 

This section presents a few notation and characterization of projected spatial point processes. Let $I$ be a subset of $\overline{d} = \{1,\ldots, d\}$ with cardinality $\abs{I}=\iota$. In the following, we let~$B\subset\RR^d$ be a compact set, which can be written as $B_1\times \dots \times B_d$. We denote by~$B_I$ the set
	$
	B_I = \prod_{i\in I} B_i\,
	$
	with $B=B_{\overline{d}}$
	and by~$P_I$ the orthogonal projection of~$\RR^d$ onto~$\RR^{\iota}$.
	For any point process~$\XX$ defined on such a compact~$B\subset\RR^d$, the projected point process~$\XX_I=P_I\XX$ is then defined on~$B_I$. For any $x \in B$, we often use the notation $x_I$ to denote $P_I x$. We sometimes use the notation $\XX_{\overline d}=\XX$ when $I=\overline d$. The following Lemma provides a general way to evaluate intensity functions of $\XX_I$. 
\begin{lemma} \label{lem:XI}
	 Let $I\subset\overline{d}$ and let $\XX$ be a spatial point process defined on a compact set $B\subset \RR^d$. Then, 
		 for any $k\ge 1$ such that $\rho_\XX^{(k)}$ exists,  $\rho_{\XX_I}^{(k)}$ is well-defined and
			\begin{align} \label{eq:rhok_proj}
				\rho_{\XX_I}^{(k)}&\left(x^{(1)},\ldots,x^{(k)}\right)  \nonumber\\
				& = \int_{(B_{I^c})^k} \rho_{\XX}^{(k)} \left(\left(x^{(1)},u^{(1)}\right),\ldots,\left(x^{(k)},u^{(k)}\right)\right) \de u^{(1)}\ldots\de u^{(k)} 
			\end{align}
	for any pairwise distinct $x^{(1)},\ldots,x^{(k)}\in B_I$
where~$I^c=\overline{d}\setminus I$.
\end{lemma}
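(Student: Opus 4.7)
The plan is to derive \eqref{eq:rhok_proj} directly from the defining Campbell identity \eqref{eq:rhok_def}, by testing the projected process $\XX_I$ against an arbitrary non-negative measurable function $h:(B_I)^k\to\RR^+$ and recognizing the integrand.

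Fix $k\ge 1$ such that $\rho_\XX^{(k)}$ exists, and let $\tilde h:B^k\to\RR^+$ be defined by $\tilde h(x^{(1)},\ldots,x^{(k)}) = h(P_I x^{(1)},\ldots,P_I x^{(k)})$. The first step is to relate the sum over ordered $k$-tuples of distinct points of $\XX_I$ to that over $\XX$. Since $\rho_\XX^{(2)}$ exists, the pair distribution of $\XX$ is absolutely continuous, so the event that two distinct points of $\XX$ share all their coordinates indexed by $I$ has probability zero. Hence $P_I$ restricted to $\XX$ is almost surely injective, and
\[
\sum_{y^{(1)},\ldots,y^{(k)}\in\XX_I}^{\neq} h\bigl(y^{(1)},\ldots,y^{(k)}\bigr)
\;=\; \sum_{x^{(1)},\ldots,x^{(k)}\in\XX}^{\neq} \tilde h\bigl(x^{(1)},\ldots,x^{(k)}\bigr)
\quad\text{a.s.}
\]

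The second step is to take expectations and apply the Campbell identity \eqref{eq:rhok_def} for $\XX$ to the right-hand side, obtaining
\[
\EE\Bigg[\sum_{y^{(1)},\ldots,y^{(k)}\in\XX_I}^{\neq} h\bigl(y^{(1)},\ldots,y^{(k)}\bigr)\Bigg]
= \int_{B^k} \rho_\XX^{(k)}\bigl(x^{(1)},\ldots,x^{(k)}\bigr)\, h\bigl(P_I x^{(1)},\ldots,P_I x^{(k)}\bigr)\, \de x^{(1)}\ldots\de x^{(k)}.
\]
The third step is to write $B = B_I\times B_{I^c}$ and decompose each $x^{(i)}$ as a pair $(y^{(i)},u^{(i)})$ with $y^{(i)}\in B_I$, $u^{(i)}\in B_{I^c}$, then invoke Fubini's theorem (whose hypotheses hold since the integrand is non-negative and measurable) to pull the $y$-integrals outside. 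This yields exactly the Campbell representation \eqref{eq:rhok_def} for $\XX_I$, with $\rho_{\XX_I}^{(k)}$ given by the claimed expression \eqref{eq:rhok_proj}. Uniqueness of the density in the Campbell formula (applied to arbitrary $h$) then both establishes existence of $\rho_{\XX_I}^{(k)}$ and pins down its value.

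The only subtle point is the almost-sure injectivity of $P_I$ on $\XX$ used in the first step; this is where the assumption that $\rho_\XX^{(k)}$ (in particular $\rho_\XX^{(2)}$) exists plays its role, guaranteeing that distinct points of $\XX$ almost surely project to distinct points of $B_I$. The rest is a routine application of Campbell's formula and Fubini, so I do not anticipate any genuine obstacle.
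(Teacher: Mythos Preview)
Your argument is correct and follows essentially the same route as the paper's proof: test against an arbitrary non-negative $h$ on $(B_I)^k$, lift to $\tilde h=h\circ P_I$ on $B^k$, apply Campbell's formula for $\XX$, split the integration via Fubini, and identify the density. The one place where you are more careful than the paper is the passage from the sum over distinct $k$-tuples in $\XX_I$ to the sum over distinct $k$-tuples in $\XX$: the paper writes this equality without comment, whereas you correctly note that it requires $P_I$ to be almost surely injective on $\XX$, and you supply the justification via absolute continuity of the second factorial moment measure.
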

Lemma~\ref{lem:XI} is obtained by a simple application of Campbell's Theorem. Its proof is provided in~\ref{app:proof_XI} for the sake of completeness.
We now turn to the core of this paper which is the study of projected determinantal point processes.


\subsection{Distribution of $\XX_I$ when $\XX \sim\DPP_B(K)$}
	
According to \eqref{eq:rhok_proj}, the $k$th order intensity function of the projected point process $\XX_I$ is given by
\begin{align}
\rho_{\XX_I}^{(k)}\left(x^{(1)},\ldots,x^{(k)}\right) & = \int_{({B_{I^c}})^k} \rho_{\XX}^{(k)} \left((x,u)^{(1)},\ldots,(x,u)^{(k)}\right) \de u^{(1)}\ldots\de u^{(k)} \nonumber\\[2ex]
& = \int_{({B_{I^c}})^k} \det\left[K((x,u)^{(i)},(x,u)^{(j)})\right]_{i,j=1}^k \de u^{(1)}\ldots\de u^{(k)} \nonumber\\[2ex]
& = \sum_{\sigma\in S_k} (-1)^{k-C(\sigma)} \int_{(B_{I^c})^k} \prod_{i=1}^k K\left((x,u)^{(i)},(x,u)^{(\sigma(i))}\right) \nonumber\\
& \phantom{\sum_{\sigma\in S_k} (-1)^{k-C(\sigma)} \int_{(B_{I^c})^k}}\de u^{(1)}\ldots\de u^{(k)} \label{eq:rhok_proj_DPP1}
\end{align}		
where~$S_k$ is the symmetric group on~$\overline{k}=\{1,\dots,k\}$, 
$C(\sigma)$ is the number of disjoint cycles of $\sigma$, and $(x,y)^{(i)}$ denotes $(x^{(i)},y^{(i)})$. Without any assumption on  kernel~$K$, there is no chance to reduce~\eqref{eq:rhok_proj_DPP1} any further, i.e. to have an explicit form for $\rho_{\XX_I}^{(k)}$ and thus $g_{\XX_I}$. Therefore, without additional assumption, it is difficult to assess whether $g_{\XX_I}$ is smaller than 1 or not. The following assumptions will allow us to solve this problem.

\begin{description}[style=unboxed,leftmargin=0cm]
\item[Assumption \eqref{hyp:sep_kern}] For $I \subseteq \overline d$, the  kernel $K$ can be written as 
\begin{equation}\label{hyp:sep_kern} \tag{\(H[I]\)}
	K(x,y) = K_I(x_I, y_I)  K_{I^c}(x_{I^c}, y_{I^c})
\end{equation}
where \mbox{$K_I:B_I\times B_I\rightarrow\CC$} and \mbox{$K_{I^c}:B_{I^c}\times B_{I^c}\rightarrow\CC$} are two continuous covariance functions.

Assumption~\eqref{hyp:sep_kern} implies that $K$ admits the Mercer's decomposition \linebreak
\mbox{$K(x,y)  = \sum_{j \in \NN} \lambda_j \phi_j(x) \overline{\phi}_j(y)$,} where $\NN=\NN_I \times \NN_{I^c}$, $\lambda_j = \lambda^{(I)}_{j_I}\lambda^{(I^c)}_{j_{I^c}}$, \linebreak $\phi_j(x)=\phi^{(I)}_{j_I}(x_I)\phi^{(I^c)}_{j_{I^c}}(x_{I^c})$ for $j=(j_I,j_{I^c})$, $x=(x_I,x_{I^c})$. Here, for $\bullet=I,I^c$, $\{\phi_{j_\bullet}^{(\bullet)}\}_{j_\bullet \in \NN_\bullet}$ 
is a set of normalized eigenfunctions  of $K_\bullet$, (and thus an orthonormal basis of $L^2(B_\bullet)$) and $\lambda_{j_\bullet}$ denote the eigenvalues of $K_\bullet$.

If $K$ admits a Mercer's decomposition with respect to the Fourier basis such that its eigenvalues satisfy the above separability property, then~\eqref{hyp:sep_kern} is satisfied. Hence, Fourier basis appears as a natural basis and leads us to consider, the following natural extension of~\eqref{hyp:sep_kern} that would be assumed for any $I \subseteq \overline d$.
\item[Assumption \eqref{hyp:sep_id_kern}] We assume that the kernel $K$ satisfies \eqref{hyp:sep_kern} for any $I \subseteq \overline d$, 
is stationary and can be written as the product of $d$ one-dimensional stationary kernels:
\begin{equation} \label{hyp:sep_id_kern} \tag{\(H^{\prime}\)}
	K(x-y) = \prod_{i=1}^d K_i(x_i-y_i),\quad x,y\in B
\end{equation}
where for any $i\in\overline{d}$, each $K_i:B_i\times B_i\rightarrow\CC$ is a stationary continuous kernel. 
\item[Assumption \eqref{hyp:sep_id_kern0}] We will also focus on the particular case where all kernels are identical, i.e. $K_i\equiv K_0$ for all $i\in\overline{d}$:
\begin{equation} \label{hyp:sep_id_kern0} \tag{\(H^{\prime\prime}\)}
	K(x-y) = \prod_{i=1}^d K_0(x_i-y_i),\qquad x,y\in B.
\end{equation}
Assumption~\eqref{hyp:sep_id_kern0} is well-suited to the situation where we have no a priori information on the projection $P_I$ from the initial point process $\XX$ we want to study.

\end{description}
We could remove the stationarity assumption in Assumption~\eqref{hyp:sep_id_kern}. However, as revealed by Sections~\ref{sec:ex} and~\ref{sec:app}, stationarity allows us to plot pcfs or Ripley's functions of $\XX_I$ for any $I$. It thus provides a visual interpretation of regularity properties for $\XX_I$.  Furthermore, going back to one motivation of this paper, there is a priori no reason to construct a design which favours particular spatial areas. Thus, considering a stationary kernel which ensures that the intensity is constant makes sense.


%
\begin{theorem} \label{theo:proj_DPP_sep_kern}
	Let $I\subseteq\overline{d}$  and $\XX\sim \DPP_B(K)$ such that $K$ satisfies \eqref{hyp:sep_kern}.
Then the $k$th order intensity function of the projected point process $\XX_I$ is given by
\begin{align} \label{eq:rhok_proj_DPP_sep_kern_res}
\rho_{\XX_I}^{(k)}\left(x^{(1)},\ldots,x^{(k)}\right)  
= \sum_{\sigma\in S_k}&(-1)^{k-C(\sigma)}\left[\prod_{i=1}^k K_I\left(x^{(i)},x^{(\sigma(i))}\right)\right]\\
& \tr_{B_{I^c}}(K_{I^c})^{k-c(\sigma)}\prod_{\varepsilon\in\mathcal{S}(\sigma)}\tr_{B_{I^c}}\left(K_{I^c}^{(c(\varepsilon))}\right)\nonumber
\end{align}		
where $c(\sigma)$ is the size of the support $\supp(\sigma) = \left\{i\in\overline{k}\text{  s.t.  }\sigma(i)\neq i\right\}$, $\mathcal{S}(\sigma)$ is the set of disjoint cycles of $\sigma$ with non-empty support, $C(\sigma)$ is the number of disjoint cycles of $\sigma$ (including those with empty support) and for a kernel $K$, $K^{(m)}$ for $m>1$, stands for the iterated kernel defined by \linebreak \mbox{$K^{(m)}(x,y)=\int K^{{(m-1)}}(x,z)K(z,y) \mathrm d z$} (with $K^{(1)}=K$).
In particular, the intensity of $\XX_I$ is given by $\rho_{\XX_I}(x) = K_I(x, x) \tr_{B_{I^c}}\left(K_{I^c}\right)$ and its pcf is given by

\begin{equation} \label{eq:pcf_proj_DPP}
		g_{\XX_I}(x,y) = 1-\frac{\tr_{B_{I^c}}\left(K_{I^c}^{(2)}\right)}{\tr_{B_{I^c}}(K_{I^c})^2}(1-g_{\YY^{(I)}}(x,y))
\end{equation}
for any pairwise distinct $x,y\in B_I$ and where $\YY^{(I)}\sim\DPP_{B_I}(K_I)$.
\end{theorem}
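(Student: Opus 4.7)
The plan is to start from the expression~\eqref{eq:rhok_proj_DPP1} which, after expanding the determinant, writes $\rho_{\XX_I}^{(k)}$ as a signed sum over permutations $\sigma \in S_k$. The key tool will be the multiplicative structure provided by~\eqref{hyp:sep_kern}, together with the elementary combinatorial fact that the product $\prod_{i=1}^k K_{I^c}(u^{(i)}, u^{(\sigma(i))})$ naturally splits along the disjoint-cycle decomposition of $\sigma$, and that each cycle gives rise to a trace of an iterated kernel.

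First, I would substitute \eqref{hyp:sep_kern} into~\eqref{eq:rhok_proj_DPP1}, which yields
\[
\prod_{i=1}^k K\left((x,u)^{(i)},(x,u)^{(\sigma(i))}\right) = \left[\prod_{i=1}^k K_I\left(x^{(i)},x^{(\sigma(i))}\right)\right]\prod_{i=1}^k K_{I^c}\left(u^{(i)},u^{(\sigma(i))}\right).
\]
The first factor does not depend on $u^{(1)},\dots,u^{(k)}$ and can be pulled out of the integral. For the second factor I would decompose $\sigma$ into its disjoint cycles. Each fixed point $i$ (i.e.\ $\sigma(i)=i$) contributes the integral $\int_{B_{I^c}} K_{I^c}(u^{(i)},u^{(i)}) \de u^{(i)} = \tr_{B_{I^c}}(K_{I^c})$. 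Each non-trivial cycle $\varepsilon = (i_1\, i_2 \, \cdots \, i_\ell)$ of length $\ell = c(\varepsilon)$ contributes, after Fubini, a chained integral
\[
\int_{(B_{I^c})^\ell} K_{I^c}\bigl(u^{(i_1)},u^{(i_2)}\bigr) K_{I^c}\bigl(u^{(i_2)},u^{(i_3)}\bigr)\cdots K_{I^c}\bigl(u^{(i_\ell)},u^{(i_1)}\bigr)\,\de u^{(i_1)} \cdots \de u^{(i_\ell)},
\]
which, by iterated application of the definition $K^{(m)}(x,y)=\int K^{(m-1)}(x,z) K(z,y)\de z$, equals $\tr_{B_{I^c}}\bigl(K_{I^c}^{(\ell)}\bigr)$. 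Since $\sigma$ has exactly $k-c(\sigma)$ fixed points and the non-trivial cycles are indexed by $\mathcal{S}(\sigma)$, multiplying the contributions together yields \eqref{eq:rhok_proj_DPP_sep_kern_res}.

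For the intensity, specialize to $k=1$: $S_1=\{\mathrm{id}\}$, $C(\mathrm{id})=1$, $c(\mathrm{id})=0$, $\mathcal{S}(\mathrm{id})=\emptyset$, giving $\rho_{\XX_I}(x) = K_I(x,x)\,\tr_{B_{I^c}}(K_{I^c})$. For the pcf, take $k=2$: the identity contributes $K_I(x,x) K_I(y,y)\tr_{B_{I^c}}(K_{I^c})^2$ and the transposition contributes $-|K_I(x,y)|^2\,\tr_{B_{I^c}}(K_{I^c}^{(2)})$, since $K_I$ is Hermitian. Dividing by $\rho_{\XX_I}(x)\rho_{\XX_I}(y)$ and using~\eqref{eq:pcf_DPP} applied to $\YY^{(I)}\sim\DPP_{B_I}(K_I)$ gives exactly~\eqref{eq:pcf_proj_DPP}.

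The main obstacle is the cycle-decomposition bookkeeping: one must carefully check that cycle lengths $\ell\geq 1$ are handled uniformly (treating fixed points as length-$1$ cycles contributing $\tr_{B_{I^c}}(K_{I^c})=\tr_{B_{I^c}}(K_{I^c}^{(1)})$, but then factoring them out separately as $\tr_{B_{I^c}}(K_{I^c})^{k-c(\sigma)}$ to match the stated form), and that Fubini is justified, which follows from continuity of $K_{I^c}$ on the compact set $B_{I^c}\times B_{I^c}$ and hence its boundedness. Apart from this, everything reduces to direct algebra.
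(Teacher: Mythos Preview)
Your proposal is correct and follows essentially the same route as the paper's proof: substitute \eqref{hyp:sep_kern} into \eqref{eq:rhok_proj_DPP1} to separate the $K_I$ and $K_{I^c}$ factors, then evaluate the $K_{I^c}$-integral by decomposing $\sigma$ into disjoint cycles so that each cycle of length $\ell$ yields $\tr_{B_{I^c}}(K_{I^c}^{(\ell)})$. Your write-up is in fact slightly more complete, since you also spell out the $k=1,2$ specializations and the Fubini justification, which the paper leaves implicit.
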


We focus in Theorem~\ref{theo:proj_DPP_sep_kern} on intensity functions. However, we can prove a full characterization of the distribution of $\XX_I$ via its Laplace functional. This is detailed in~\ref{sec:laplace}. In particular, Theorem~\ref{thm:laplace} shows that $\XX_I$ is distributed as an infinite superposition of independent DPPs, each with kernel $\lambda_{l}^{(I^c)}K_I$. In particular, if $K_{I^c}$ is a projection kernel, $\XX_I$ is a finite superposition of $M=\tr_{B_{I^c}}\left(K_{I^c}\right)$ i.i.d. DPPs with kernel $K_I$. Such finite superposition corresponds \cite[see][]{Houghetal06,Decreusefondetal16} to the distribution of an $\alpha$-DPP on $B_I$ with kernel $-\alpha^{-1} K_I$ where $\alpha=-M^{-1}$. $\alpha$-DPPs (and $\alpha$-determinants) are introduced in~\cite{ShiraiTakahashi03}. And it can indeed be checked from~\eqref{eq:rhok_proj_DPP_sep_kern_res} that when $K_{I^c}$ is a projection kernel, the intensity functions of $\XX_I$ are given by
\begin{align*}
		\rho_{\XX_I}^{(k)}\left(x^{(1)},\ldots,x^{(k)}\right)   & = \sum_{\sigma\in S_k} \alpha^{k-C(\sigma)}\left[\prod_{i=1}^k-\alpha^{-1} K_I\left(x^{(i)},x^{(\sigma(i))}\right)\right] \\
	& := \mathrm{det}_{\alpha} \left[-\alpha^{-1} K_I\left(x^{(i)},x^{(j)}\right)\right]_{i,j=1}^k
	\end{align*}

Equation~\eqref{eq:pcf_proj_DPP} is in our opinion the most interesting result of this paper. It reveals the repulsiveness nature of $\XX_I$. Let us examine this in details. Since $\YY^{(I)}$ is a DPP  with kernel $K_I$, it satisfies $0\le g_{\YY^{(I)}}\le 1$, which allows us to rewrite~\eqref{eq:pcf_proj_DPP} as
\begin{equation}\label{eq:pcf_proj_DPP2}
	0 \le 1 - g_{\XX_I}(x,y) = \frac{\tr_{B_{I^c}}\left(K_{I^c}^{(2)}\right)}{\tr_{B_{I^c}}(K_{I^c})^2}(1-g_{\YY^{(I)}}(x,y)) \le 
	\frac{\tr_{B_{I^c}}\left(K_{I^c}^{(2)}\right)}{\tr_{B_{I^c}}\left(K_{I^c}\right)^2}.
\end{equation}
The lower-bound of~\eqref{eq:pcf_proj_DPP2} means that $g_{\XX_I}\le 1$, i.e. $\XX_I$ is indeed a repulsive point process on $B_I$. Furthermore, the upper-bound measures in some sense the loss of repulsion and more precisely, how $g_{\XX_I}$ gets closer to 1 which corresponds the pcf of a Poisson point process. To be more precise, let us focus on the particular case~\eqref{hyp:sep_id_kern}. We have in this situation
\[
  g_{\XX_I}(x,y) \ge  1 - \prod_{i\in I^c} \frac{\tr_{B_i}\left(K_i^{(2)}\right)}{\tr_{B_i}\left(K_i\right)^2}.
\]
For each $i\in\overline{d}$, $\tr_{B_i}\left(K_i^{(2)}\right)/\tr_{B_i}\left(K_i\right)^2<1$. Therefore, when $|I^c|=d-\iota$ is large, $1-g_{\XX_I}$ is bounded by a product of large number of quantities smaller than 1, and thus the pcf of $\XX_I$ gets closer and closer to the pcf of a Poisson point process. It is even more obvious when $K$ satisfies~\eqref{hyp:sep_id_kern0}. In that case, for any $x,y\in B_I$
\[
g_{\XX_I}(x,y) \ge 1- {\kappa_0}^{d-\iota} \quad \text{ where} \quad 
\kappa_0 = \frac{\tr_{B_0}\left(K_0^{(2)}\right)}{\tr_{B_0}\left(K_0\right)^2}.
\]
For example, when $\iota=d-1$, i.e. when one skips only one coordinate: $g_{\XX_I}(x,y) \ge 1-\kappa_0>0$ and this constant is reached when $y\to x$. Since, $g_\XX(x,y)\to 0$ when $y\to x$, one can clearly measure the loss of repulsion as soon as one skips one coordinate.



 \section{Examples} \label{sec:ex}

In this section, we present particular examples of kernels defined on \mbox{$B=\I{0}{1}^d$} and satisfying~\eqref{hyp:sep_id_kern}, thus ensuring that $\XX_I$ is repulsive for any $I\subseteq \overline d$. Then, these examples are compared for different sets $I$ through their pair correlation function or through a normalized Ripley's function.

All our kernels examples have Mercer's decomposition  defined with respect to the Fourier basis \eqref{eq:fourier_basis}, the natural basis which allows~\eqref{hyp:sep_id_kern} to be satisfied.


\subsection{Gaussian kernel} \label{ssec:Gauss}
 			
The Gaussian kernel (see e.g. \cite{Lavancieretal15_1})
\[
K(x,y) = \rho\exp\left(-\norm{\frac{x-y}{\alpha}}\right)
\]
where $\norm{\cdot}$ denotes the Euclidean norm, is the typical example satisfying~\eqref{hyp:sep_id_kern0}, where $K_0$ is defined for any $x,y\in \I{0}{1}$ by:
\[
 K_0(x-y) = \rho^{1/d}\exp\left(-\left(\frac{x-y}{\alpha}\right)^2\right).
 \]
The existence of $\XX\sim\DPP_B(K)$ is ensured if $\alpha$ is such that $\rho(\alpha\sqrt{\pi})^d\le 1$.
For any $I\subseteq \overline d$, the pcf of $\XX_I$  is derived from Theorem~\ref{theo:proj_DPP_sep_kern}: for any pairwise distinct $x,y\in B_I$
\begin{equation}\label{eq:pcf_proj_gauss}
	g_{\XX_I}(x,y) = 1-\kappa_2^{d-\iota} \exp\left(-2\norm{\frac{x-y}{\alpha}}^2\right)
\end{equation}
with
\begin{equation} \label{eq:kappa2}
\kappa_2 = \frac{\tr_{B_0}\left(K_0^{(2)}\right)}{\tr_{B_0}\left(K_0\right)^2}\approx \frac{\sum_{j\in\ZZ}\exp\left(-2(j\alpha\pi)^2\right)}{\left(\sum_{j\in\ZZ}\exp\left(-(j\alpha\pi)^2\right)\right)^2}.
\end{equation}
This approximation comes from  the Fourier approximation of  kernel $K$ detailed in \cite[Section~4]{Lavancieretal15_1}. Note that for all $I\subseteq \overline d$ and $x,y \in B_I$, we use with a slight abuse the same notation $\|x-y\|$ for the Euclidean norm in $\RR^{\iota}$. 

This class of examples is of particular interest due to the isotropy property of $g_{\XX_I}$. The pcfs $g_{\XX_I}$ for different sets $I$ can be represented on the same plot. For $d=10,10^2,10^3,10^4$, Figure~\ref{fig:pcf_proj_gauss} represents the pcfs of a Gaussian DPP $\XX$ (solid lines) and its successive projections. The intensity parameter and $\alpha$ are set to $\rho_\XX=500$ and $\alpha^{-1} = \rho_\XX^{1/d}\sqrt{\pi}$. Note that the abscissa corresponds to $\|x-y\|$ for $x,y\in B_I$ for different sets $I$. Thus the differences should be understood carefully. 
Figure~\ref{fig:pcf_proj_gauss} confirms that the pcf of $\XX_I$ is upper-bounded by~1, lower-bounded by $1-\kappa_2^{d-\iota}$ and gets closer to 1 when $\iota$ decreases.

\begin{figure} 
\centering
\begin{changemargin}{-1.5cm}{0cm}
\begin{tabular}{c c}
$d=10$, $\rho=500$ & $d=100$, $\rho=500$ \\
\includegraphics[scale=0.3]{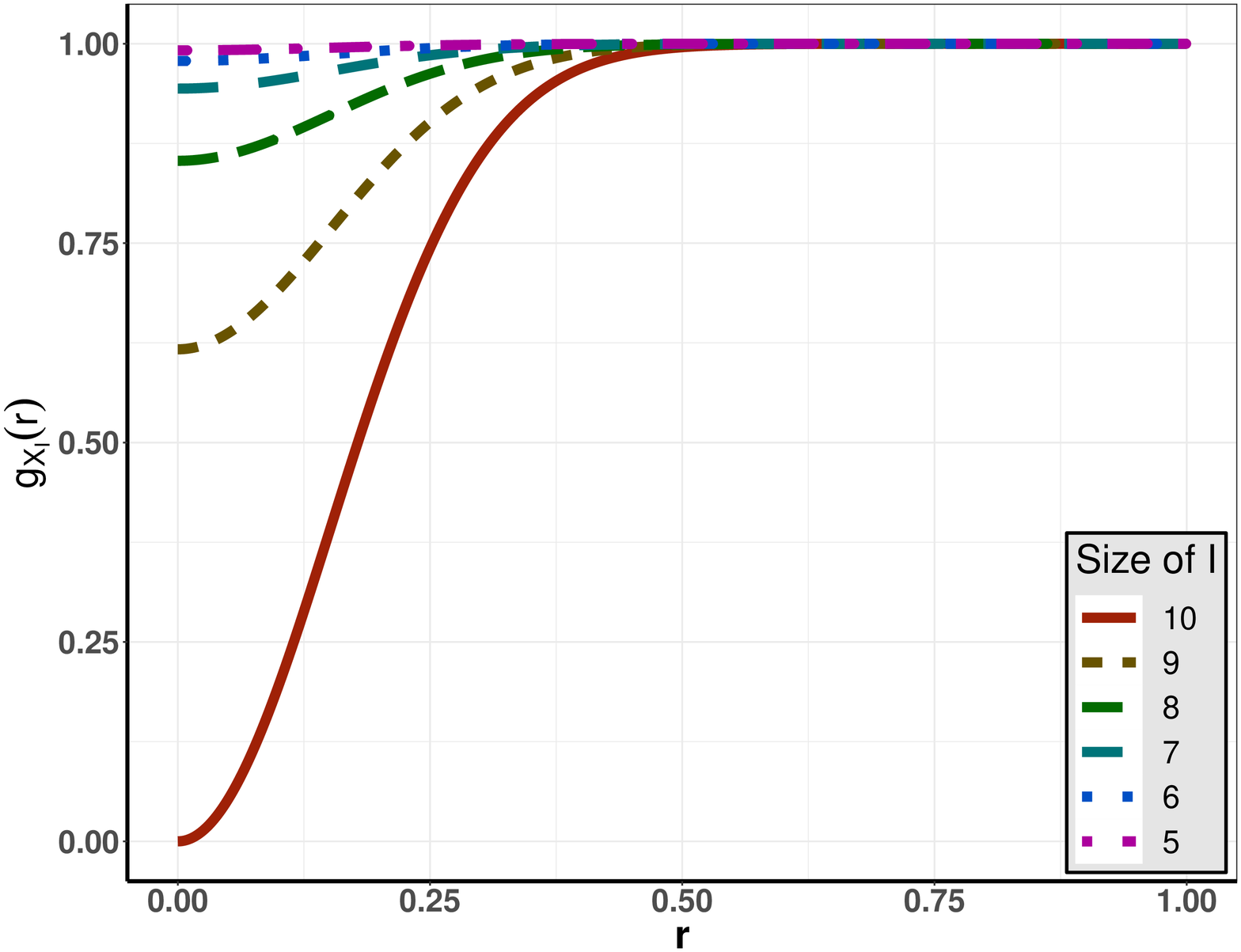} & \includegraphics[scale=0.3]{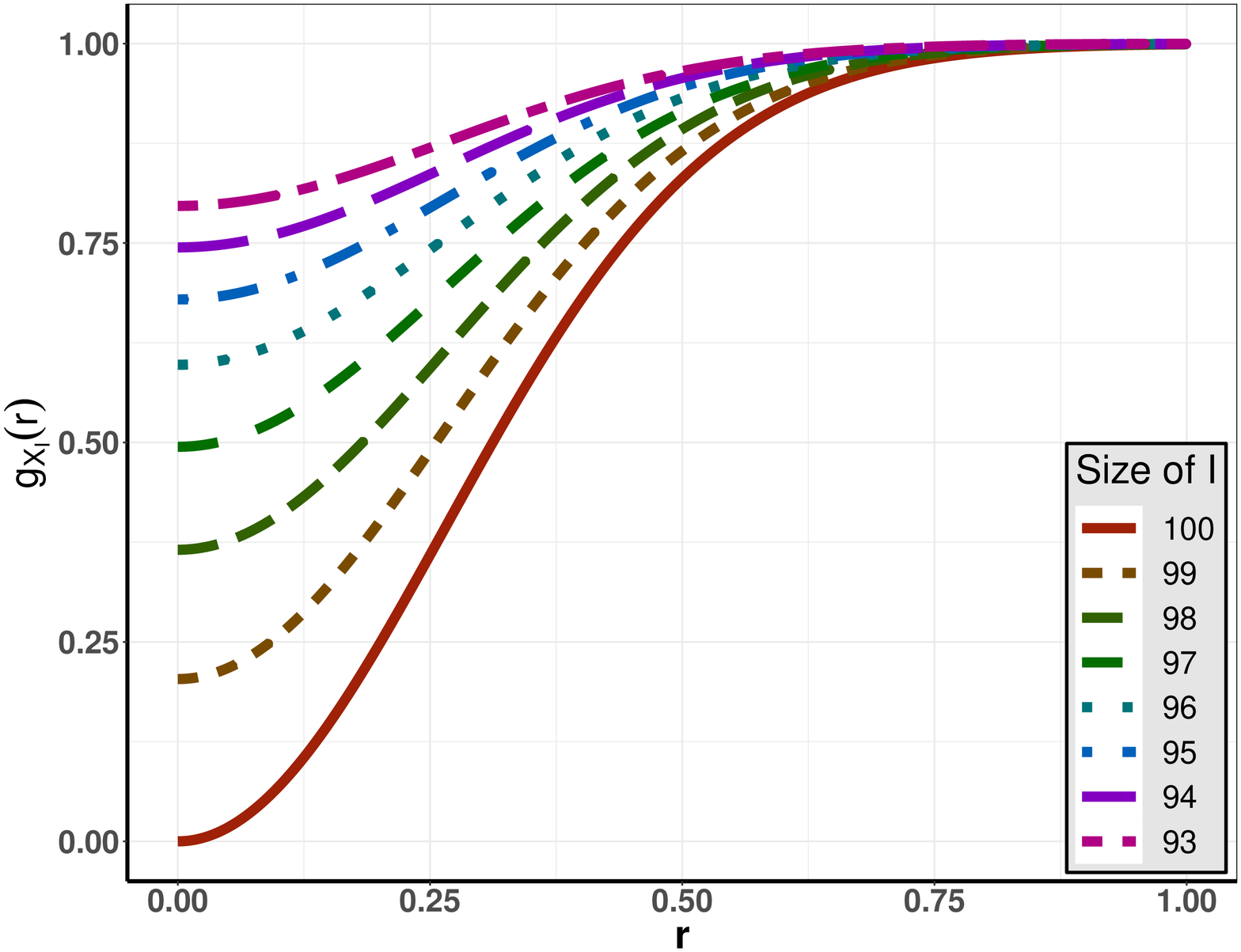} \\
$d=10^3$, $\rho=500$ & $d=10^4$, $\rho=500$ \\
\includegraphics[scale=0.3]{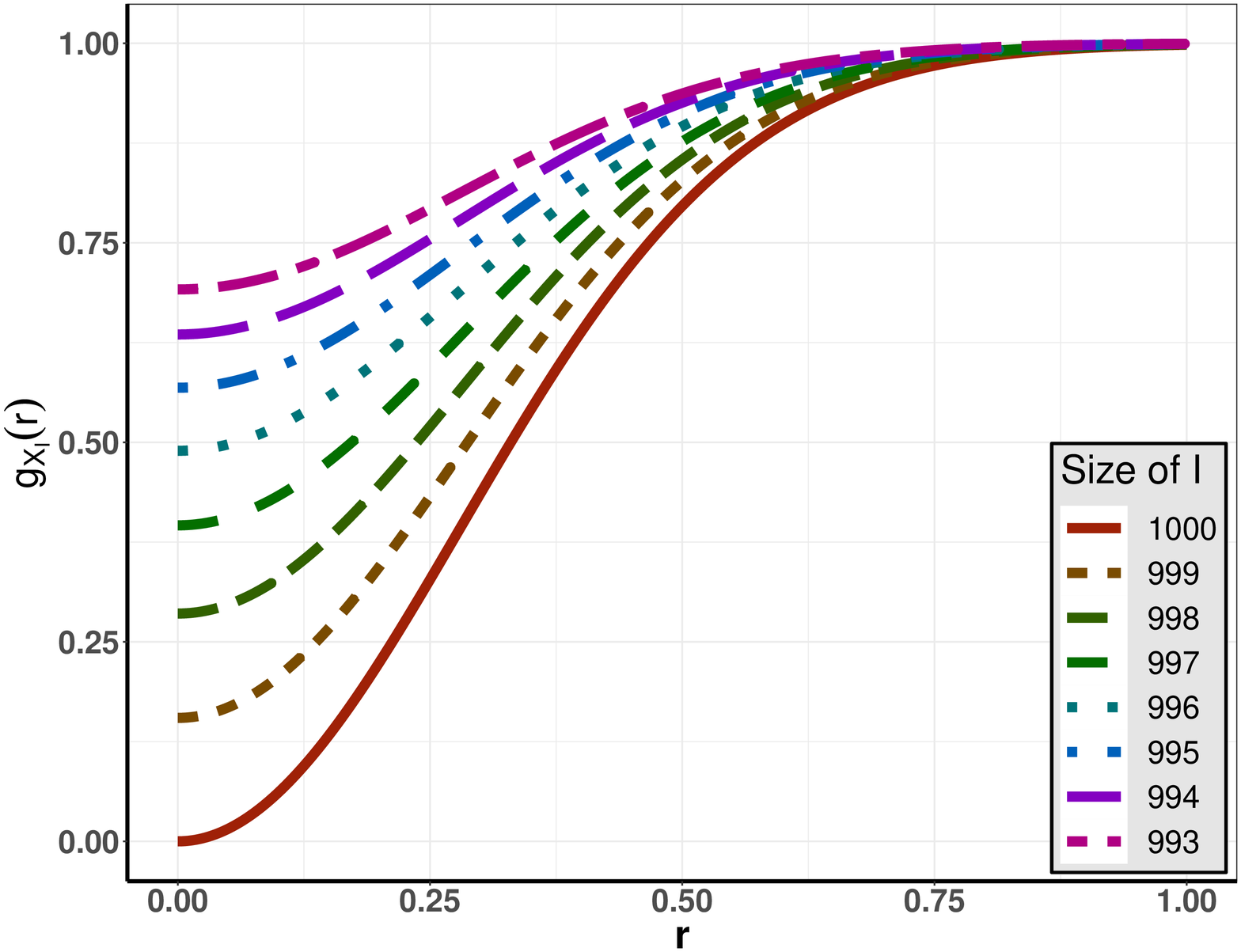} & \includegraphics[scale=0.3]{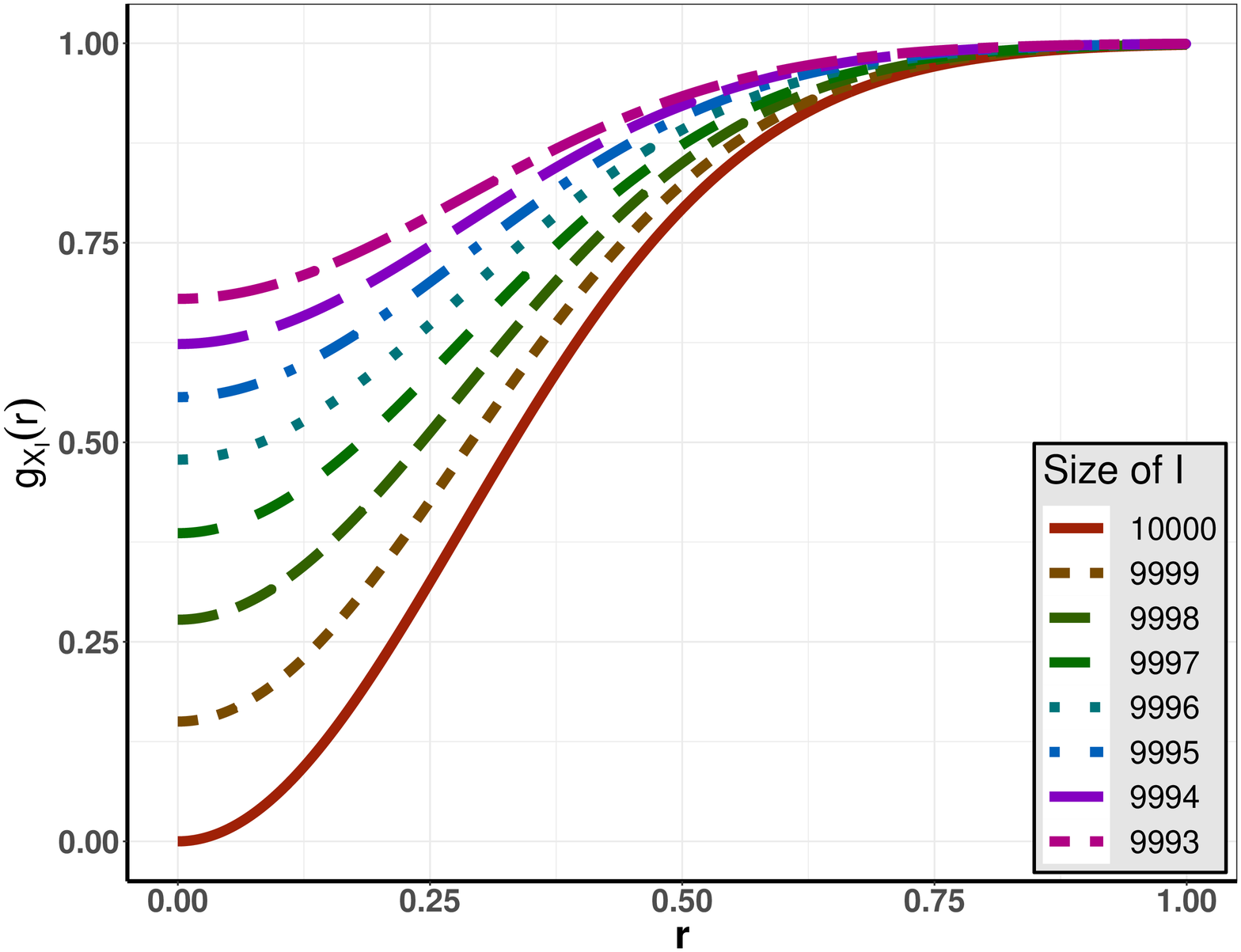} \\
\end{tabular}
\end{changemargin}
\caption{
Pair correlation functions of the Gaussian DPP $\XX$ (solid lines) with intensity $\rho_\XX = 500$ and $\alpha^{-1} =\rho_{\XX}^{1/d}\sqrt{\pi}$ and its successive projections $\XX_I$ ($\abs{I}=d-1,\ldots$; dotted and dashed lines) for $d=10$ (top left), $10^2$ (top right), $10^3$ (bottom left) and $10^4$ (bottom right).}
\label{fig:pcf_proj_gauss}
\end{figure}	
		
\subsection{$L^1$-Exponential kernel}

We now consider an exponential kernel, defined with respect to the \mbox{$L^1$-norm} instead of the Euclidean norm:
\begin{equation} \label{eq:L1Gauss_kernel}
K(x-y) = \rho\exp\left(-\norm{\frac{x-y}{\alpha}}_1\right).
\end{equation}			
The kernel~\eqref{eq:L1Gauss_kernel}  is referred to as the $L^1$-Exponential kernel in the following. It also constitutes a natural example as it satisfies~\eqref{hyp:sep_id_kern0} where $K_0$ is defined for any $x,y\in \I{0}{1}$ by:
\[
K_0(x-y) = \rho^{1/d}\exp\left(-\abs{\frac{x-y}{\alpha}}\right).
\]
The existence of $\XX\sim\DPP_B(K)$ is ensured if $\alpha$  is such that $\rho(2\alpha)^d\le1$. According to Theorem~\ref{theo:proj_DPP_sep_kern}, for any $I\subseteq \overline d$, the pcf of $\XX_I$ is given for any pairwise distinct $x,y\in B_I$ by
\begin{equation}\label{eq:pcf_proj_L1gauss}
g_{\XX_I}(x,y) = 1-\kappa_1^{d-\iota} \exp\left(-2\norm{\frac{x-y}{\alpha}}_1\right)
\end{equation}
with
\begin{equation} \label{eq:kappa1}
	\kappa_1 = \frac{\tr_{B_0}\left(K_0^{(2)}\right)}{\tr_{B_0}\left(K_0\right)^2} \approx \frac{\sum_{j\in \ZZ} \left(1+(2\pi\alpha j)^2\right)^{-2}}{\left(\sum_{j\in \ZZ} \left(1+(2\pi\alpha j)^2\right)^{-1}\right)^2}
\end{equation}
where the approximation corresponds again to the Fourier approximation. For $d=10,10^2,10^3,10^4$, Figure~\ref{fig:pcf_proj_L1gauss} represents the pcfs of an $L^1$-Exponential DPP $\XX$ (solid lines) and its successive projections with respect to the $L^1$-norm. The intensity parameter and  $\alpha$ are set to $\rho_\XX=500$ and $\alpha^{-1}=2\rho_\XX^{1/d}$. The conclusion drawn from Figure~\ref{fig:pcf_proj_L1gauss} is similar to the one from Figure~\ref{fig:pcf_proj_gauss}: the pcf of $\XX_I$ is upper-bounded by~1, lower-bounded by $1-\kappa_1^{d-\iota}$ and tends to 1 when $\iota$ decreases. We could be tempted to compare Figures~\ref{fig:pcf_proj_gauss} and~\ref{fig:pcf_proj_L1gauss} and conclude that the Gaussian DPP seems more repulsive. However, remember that both models are not isotropic with respect to the same norm. We provide in Section~\ref{sec:comparison} a summary statistic which allows us to correctly compare these models.

\begin{figure} 
\centering
\begin{changemargin}{-1.5cm}{0cm}
\begin{tabular}{c c}
$d=10$, $\rho=500$ & $d=100$, $\rho=500$ \\
\includegraphics[scale=0.3]{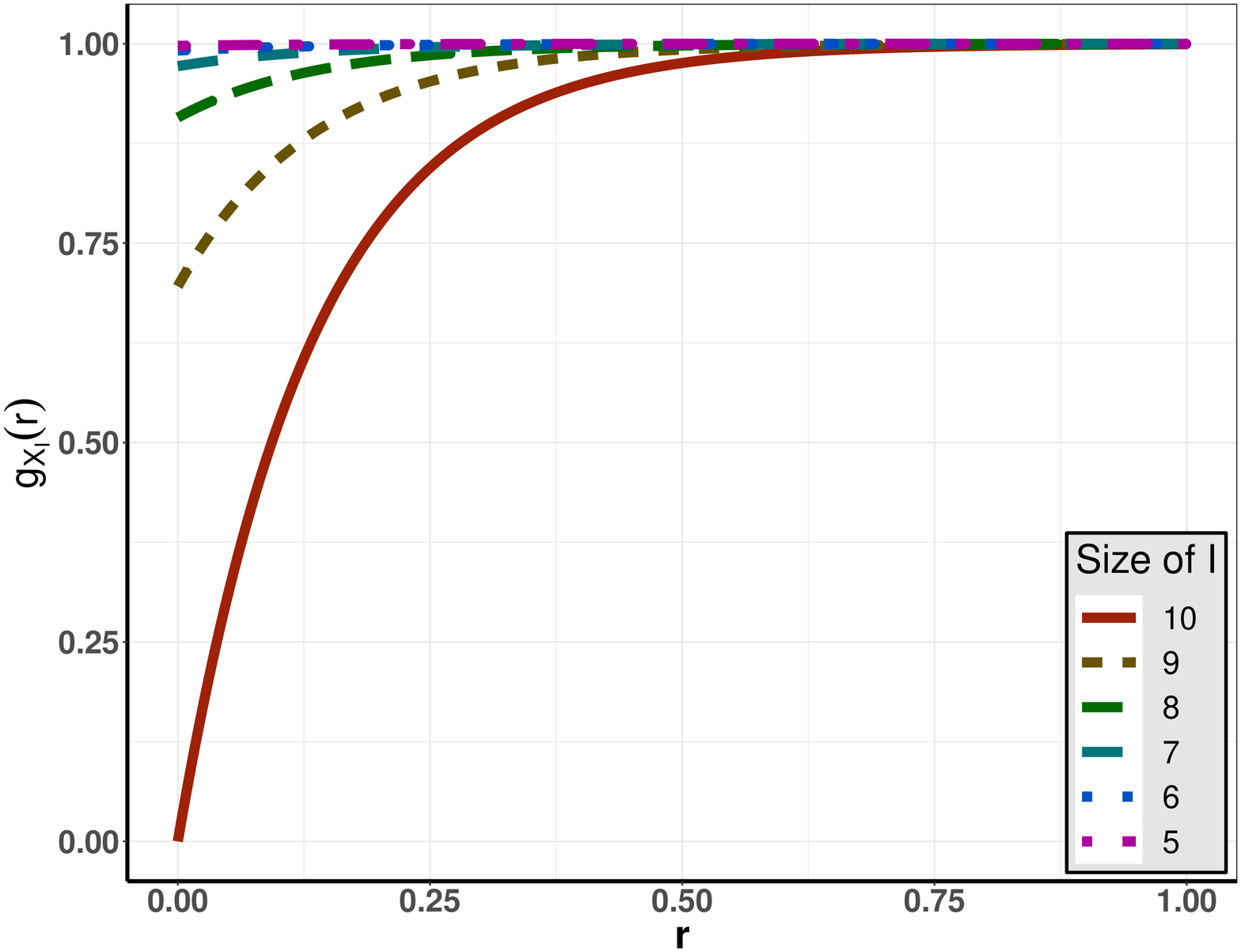} & \includegraphics[scale=0.3]{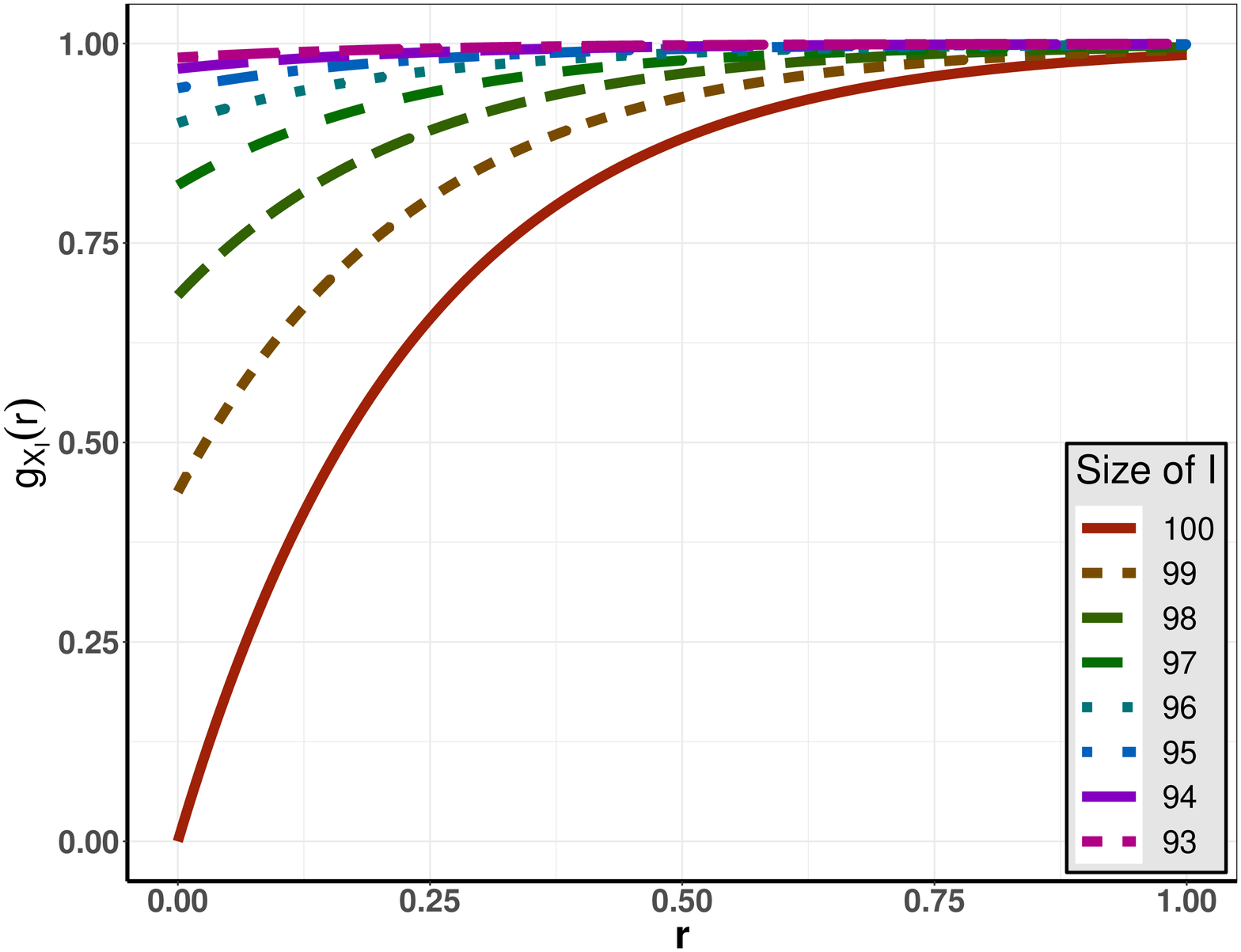} \\
$d=10^3$, $\rho=500$ & $d=10^4$, $\rho=500$ \\
\includegraphics[scale=0.3]{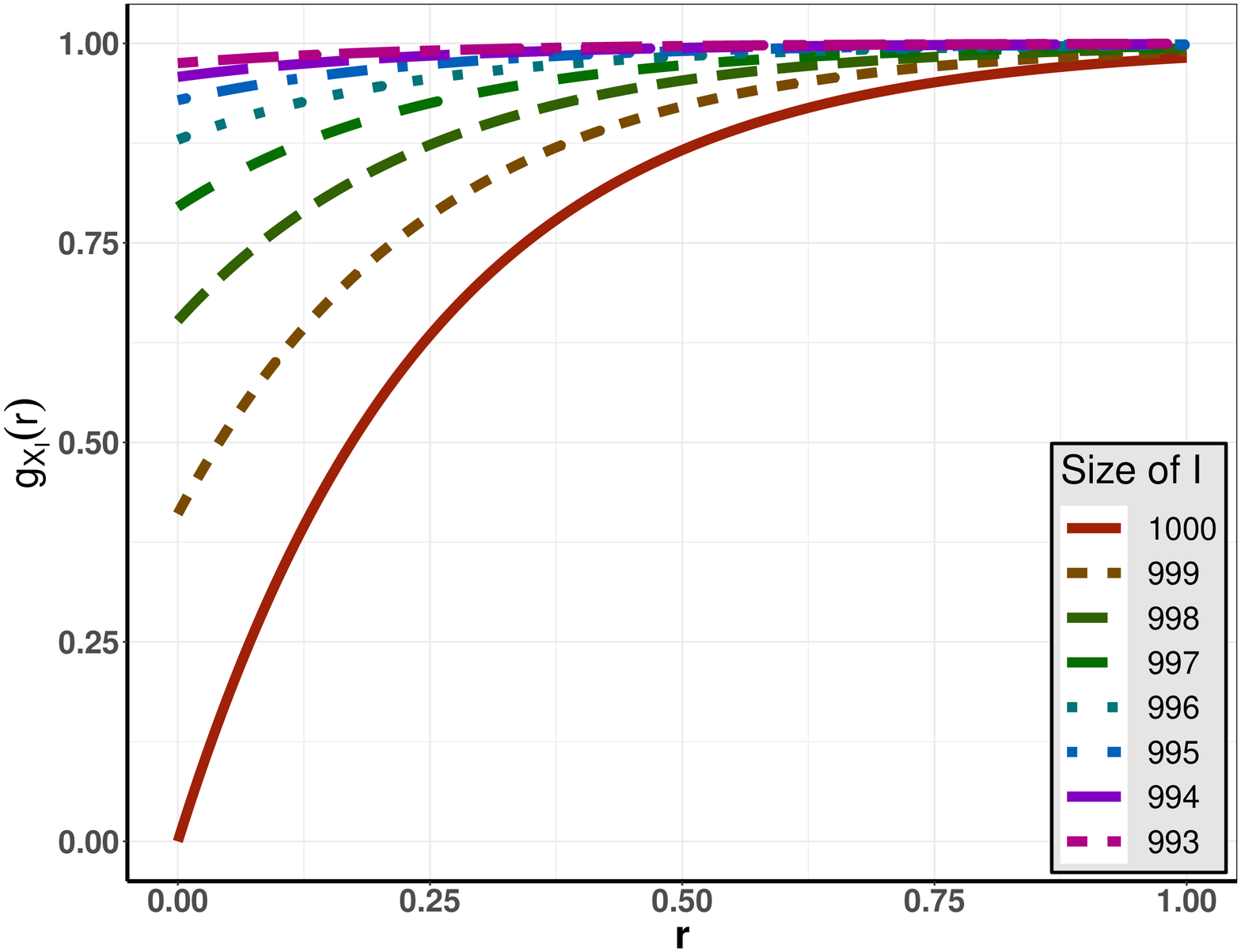} & \includegraphics[scale=0.3]{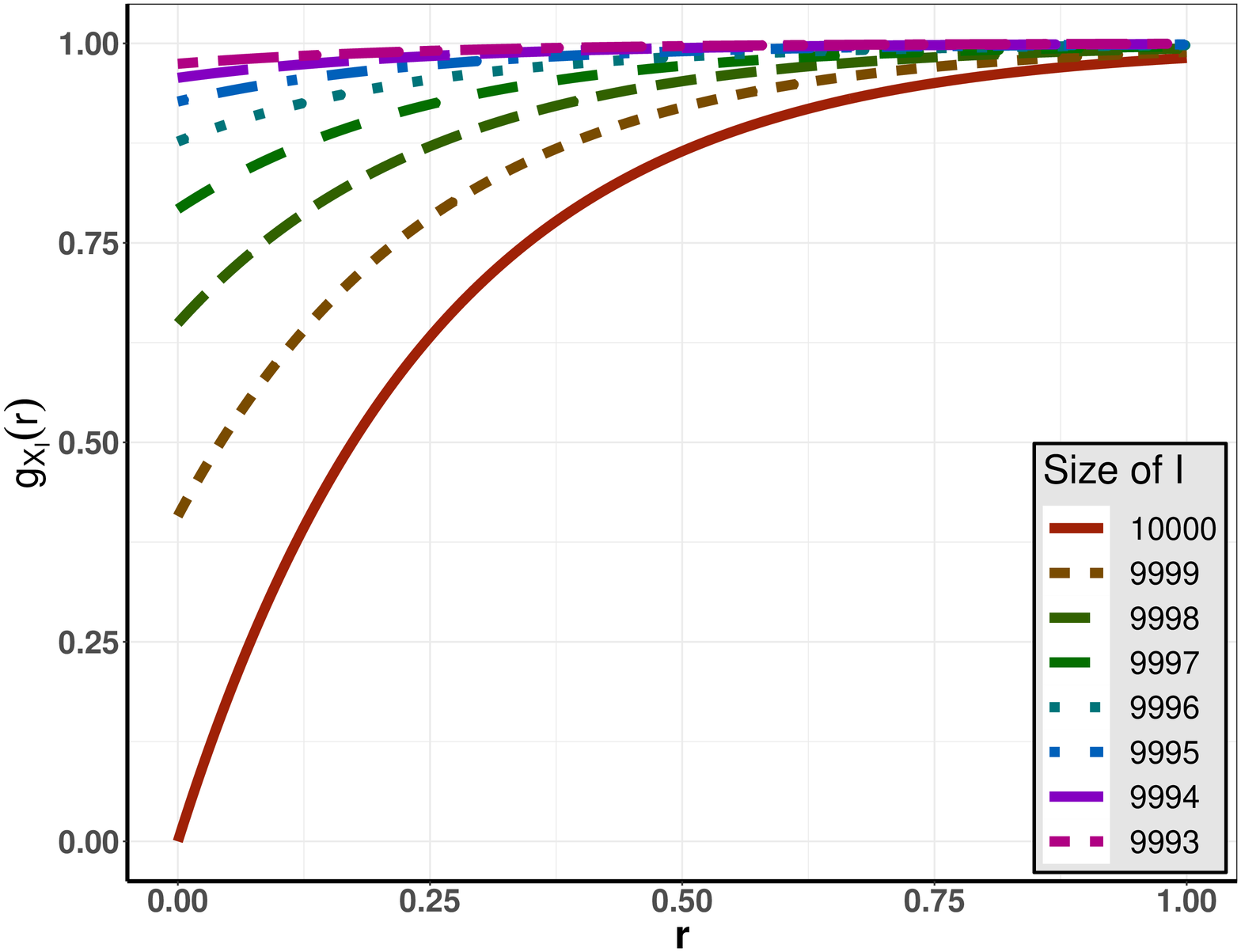}
\end{tabular}
\end{changemargin}
\caption{
Pair correlation functions of the $L^1$-Exponential DPP $\XX$ (solid lines) with intensity $\rho_\XX = 500$ and  $\alpha^{-1} =2\rho_{\XX}^{1/d}$ and its successive projections $\XX_I$ ($\abs{I}=d-1,\ldots$; dotted and dashed lines) for $d=10$ (top left), $10^2$ (top right), $10^3$ (bottom left) and $10^4$ (bottom right).
}
		\label{fig:pcf_proj_L1gauss}
\end{figure}	

\subsection{Dirichlet kernels} \label{sec:dirichlet}
	The two examples considered so far satisfy~\eqref{hyp:sep_id_kern0} by definition. The next one is a projection kernel which only satisfies~\eqref{hyp:sep_id_kern}. \rev{For $\iota=1,\dots,d$, we let $\{\phi_j^{(\iota)}\}_{j\in\ZZ^\iota}$ denote the $\iota$-dimensional Fourier basis.} We  consider $d$ positive integers $(n_i)_{i\in\overline{d}}$ and for $i\in\overline{d}$ the following  one-dimensional stationary kernel:
\[
			{K_i(x-y) = \sum_{j\in E_i} \phi^{(1)}_j(x-y)}, \rev{\quad x,y\in \I{0}{1},}
\]
where $E_i=\{a_i, a_i+1,\dots, n_i-1+a_i\}$ is a set of $n_i$ consecutive integers and $a_i\in\ZZ$. Then, we construct a kernel $K$ as
\[
	K(x-y) = \prod_{i=1}^d K_i(x_i-y_i) = \sum_{j\in E_N} \phi^{(d)}_j(x-y), \rev{\quad x,y\in \I{0}{1}^d}
\]
where $E_N = \prod_i E_i$. It is worth pointing out that
 kernel $K$ can be written as
\begin{equation}
	K(x-y)=\prod_{i=1}^d\left(\sum_{j=a_i}^{n_i-1+a_i} \phi_{j}^{(1)}(x_i-y_i)\right) = 
\phi_{a}^{(d)}(y-x)\prod_{i=1}^d\left(\sum_{j=0}^{n_i-1} \phi_{j}^{(1)}(x_i-y_i)\right)
\end{equation}
where $a = (a_i)_{i\in\overline{d}}$.
Therefore, according to Remark (4) \rev{from~\cite[p. 48]{Houghetal09}}, the choice of the $E_i$'s does not influence the distribution of the  DPP with kernel $K$. 
Remark that, if the $n_i$'s are all odd numbers and if we choose $a_i=-\lfloor n_i/2\rfloor$, the kernel $K$ equals 
\begin{equation}\label{eq:pureDirichlet}
	K(x-y) = \prod_{i=1}^d D_{\floor{\frac{n_i}{2}}}(x_i-y_i)
\end{equation}
where $D_p$ is the Dirichlet kernel \cite[e.g.][]{Zygmund02} with parameter $p$. That terminology justifies the name Dirichlet kernel for this model.
In the general case, and unambiguously we set $a_i=0$ for any $i$ and thus
consider $E_N = \{j\in\mathbb{N}^d\,:\,j_i< n_i,\, i=1\dots d\}$
\begin{equation} \label{eq:prod_nDirichlet_kernel}
	K(x-y) = \sum_{j\in E_N} 
	\e{2 \ii \pi \scal{j}{x-y}}.
\end{equation}
A DPP on $B$ with kernel given by~\eqref{eq:prod_nDirichlet_kernel} is referred to as an $(N,d)$-Dirichlet kernel. 
From Theorem~\ref{theo:proj_DPP_sep_kern}, for any $I\subseteq \overline d$, the pcf of $\XX_I$ is given for any $x,y\in B_I$ by
\begin{align} 
g_{\XX_I}(x,y) &= 1-\frac{1}{N} \sum_{j\in F_{N_I}} \left[\prod_{i\in I}\left(1-\frac{\abs{j_i}}{n_i}\right)\right] \phi^{(\iota)}_j(x-y) \nonumber\\
& =
1-\frac{1}{N}  \prod_{i\in I}     \sum_{|j|<n_i}\left(1-\frac{\abs{j}}{n_i}\right)
 \phi^{(1)}_j(x_i-y_i)
 \label{eq:pcf_proj_prod_N_Dirichlet}
\end{align}
where $F_{N_I} = \{j\in\ZZ^{\iota}\,:\,\abs{j_i}<n_i,\, i\in I\}$.
The pcf $g_{\XX_I}$ is bounded from below by $1-\prod_{i\in I^c}n_i^{-1}$. 

\rev{A question remains on the factorization of $N=\prod_{i=1}^d n_i$. We consider the factorization which minimizes the fluctuation of the $n_i$'s. For instance, when $N=100$ and $d=6$, we use $N=5\times 5 \times 2 \times 2 \times 1 \times 1$ while for $N=800$ we use the decomposition $N=5\times 5 \times 4 \times 2\times2 \times 2$.}
 
The next section provides a summary statistics well-suited to the comparison of the three examples we have so far considered.

\subsection{Normalized Ripley's function}\label{sec:comparison}

Since the Gaussian DPP and $L^1$-Exponential DPP are isotropic but with respect to a different norm and since the $(N,d)$-Dirichlet DPP is even not isotropic, it is hard to compare these different examples. 
In addition to the pcf, a way of characterizing regularity or repulsion in the literature is obtained by analyzing   Ripley's  function \cite[e.g.][]{MollerWaagepeterson04}. This function is not adapted for our framework. However, since all models satisfy~\eqref{hyp:sep_id_kern}, we propose to compare them through a normalized version of \rev{Ripley}'s function based on the sup norm $\|\cdot\|_\infty$.

For  a stationary spatial point process  $\XX$  on $B \subseteq \RR^d$, we define the normalized \mbox{$d$-dimensional} Ripley's function for some $r\ge 0$ by
\begin{equation}\label{eq:Ripley}
R_{\XX}(r) = \frac{\EE \left( N_\XX(B_{d,\infty}(0,r)\setminus 0) \mid 0 \in \XX \right)  }{\EE \left( N_{\mathbf\Pi}(B_{d,\infty}(0,r) \setminus 0) \mid 0 \in \mathbf\Pi \right)}	
\end{equation}	
where $B_{d,\infty}(0,r) = \{w\in \RR^d: |w_i|\le r, i=1,\dots,d\}$ is the $d$-dimensional ball with norm $\|\cdot\|_\infty$ centered at zero with radius $r$, $\mathbf\Pi$ is a homogeneous Poisson point process on $B$ with intensity $\rho$ and $N_\XX(A)$ (resp. $N_{\mathbf\Pi}(A)$) denotes the number of points of $\XX$ (resp. $\mathbf{\Pi}$) in a bounded subset $A\subset \RR^d$.
Assuming that $\XX$ has a pcf, say $g_\XX$, it is known from the properties of the second factorial moment that
\begin{equation} \label{eq:Ripley2}
	R_{\XX}(r) = \frac{\int_{B_{d,\infty}(0,r)} g_\XX(w) \mathrm d w}{\int_{B_{d,\infty}(0,r)} g_{\mathbf\Pi}(w) \mathrm d w} = (2r)^{-d} \int_{B_{d,\infty}(0,r)} g_\XX(w) \mathrm d w.
\end{equation}
Obviously, under the Poisson case $R_\XX=1$ whereas $R_\XX<1$ means that $\XX$ is repulsive. More precisely, the more $R_\XX<1$ the more repulsive $\XX$. 
We now present the interest of $R_\XX$ in our context.

\begin{proposition} \label{prop:Ripley}
Let $\XX \sim \DPP_{B}(K)$ be a DPP with kernel $K$ satisfying~\eqref{hyp:sep_id_kern}. Then, for any $I \subseteq \overline{d}$
\begin{equation} \label{eq:Ripley_proj}
R_{\XX_I} (r) = 1-\left(\prod_{i\in I^c}\frac{\tr_{B_i}\left(K_i^{(2)}\right)}{\tr_{B_i}\left(K_i\right)^2}\right)\left( \prod_{i\in I} \int_{0}^1 \frac{\abs{K_i(tr)}^2}{K_i(0)^2} \mathrm d t\right)
\end{equation}
In particular, if $K$ satisfies \eqref{hyp:sep_id_kern0}:
\begin{equation} \label{eq:Ripley_proj0}
	R_{\XX_I} (r) = 1 -  \kappa_0 ^{d-\iota} \left( \int_{0}^1 \frac{\abs{K_0(tr)^2}}{K_0(0)^2} \de t\right)^{\iota}
\end{equation}
where 
\[
\kappa_0 = \frac{\tr_{B_0}\left(K_0^{(2)}\right)}{\tr_{B_0}\left(K_0\right)^2}.
\]
\end{proposition}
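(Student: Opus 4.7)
The plan is to start from the definition \eqref{eq:Ripley2} of the normalized Ripley function applied to $\XX_I$ on $B_I \subseteq \RR^\iota$, namely
\[
R_{\XX_I}(r) = (2r)^{-\iota} \int_{B_{\iota,\infty}(0,r)} g_{\XX_I}(w)\, \de w = 1 - (2r)^{-\iota} \int_{B_{\iota,\infty}(0,r)} \bigl(1 - g_{\XX_I}(w)\bigr)\, \de w,
\]
and to explicitly evaluate $1 - g_{\XX_I}(w)$ using Theorem~\ref{theo:proj_DPP_sep_kern} combined with the product structure provided by~\eqref{hyp:sep_id_kern}.

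First I would specialize~\eqref{eq:pcf_proj_DPP} under Assumption~\eqref{hyp:sep_id_kern}. The separability $K(x-y) = \prod_{i=1}^d K_i(x_i-y_i)$ implies simultaneously that $K_I(x_I - y_I) = \prod_{i\in I} K_i(x_i - y_i)$ and $K_{I^c}(x_{I^c} - y_{I^c}) = \prod_{i\in I^c} K_i(x_i - y_i)$, so that the traces factorize: $\tr_{B_{I^c}}(K_{I^c}) = \prod_{i \in I^c} \tr_{B_i}(K_i)$, and likewise for the iterated kernel $K_{I^c}^{(2)}(x,y) = \prod_{i \in I^c} K_i^{(2)}(x_i,y_i)$ (which is a straightforward Fubini computation), giving $\tr_{B_{I^c}}(K_{I^c}^{(2)}) = \prod_{i \in I^c} \tr_{B_i}(K_i^{(2)})$. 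Moreover, the DPP $\YY^{(I)}$ with kernel $K_I$ satisfies, by~\eqref{eq:pcf_DPP},
\[
1 - g_{\YY^{(I)}}(x,y) = \frac{|K_I(x-y)|^2}{K_I(0)^2} = \prod_{i \in I} \frac{|K_i(x_i - y_i)|^2}{K_i(0)^2}.
\]
Plugging these into~\eqref{eq:pcf_proj_DPP} yields a fully factorized expression for $1 - g_{\XX_I}(w)$ as a product indexed by $i \in \overline{d}$.

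Next I would integrate. Since $B_{\iota,\infty}(0,r) = \prod_{i \in I}[-r,r]$ is a product set and the integrand is itself a product in the coordinates $w_i$, Fubini gives
\[
\int_{B_{\iota,\infty}(0,r)} \prod_{i \in I} \frac{|K_i(w_i)|^2}{K_i(0)^2}\, \de w = \prod_{i \in I} \int_{-r}^{r} \frac{|K_i(w_i)|^2}{K_i(0)^2}\, \de w_i.
\]
Each $K_i$ is Hermitian, so $|K_i(\cdot)|^2$ is even on $[-r,r]$; hence each one-dimensional integral equals $2\int_0^r |K_i(s)|^2/K_i(0)^2\, \de s = 2r \int_0^1 |K_i(tr)|^2/K_i(0)^2\, \de t$ after the change of variable $s = tr$. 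Multiplying the $\iota$ factors of $2r$ cancels the prefactor $(2r)^{-\iota}$, and~\eqref{eq:Ripley_proj} follows at once. The identity~\eqref{eq:Ripley_proj0} is an immediate specialization, setting $K_i \equiv K_0$ for all~$i$.

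I do not anticipate a genuine obstacle: everything reduces to bookkeeping once the separability of $K$, $K_I$, $K_{I^c}$ and of the iterated trace $\tr_{B_{I^c}}(K_{I^c}^{(2)})$ is made explicit. The only mildly technical point is verifying $K_{I^c}^{(2)} = \prod_{i \in I^c} K_i^{(2)}$, which amounts to one application of Fubini on the iterated integral defining $K_{I^c}^{(2)}$; the remaining steps are a direct factorization of the integrand and the change of variables $s = tr$.
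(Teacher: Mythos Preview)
Your proposal is correct and follows exactly the route the paper indicates: the paper merely states that the result ``follows directly from~\eqref{eq:pcf_proj_DPP} and~\eqref{eq:Ripley2}'', and your argument is precisely the detailed unpacking of that sentence, factorizing the trace ratio and the pcf under~\eqref{hyp:sep_id_kern}, then integrating coordinatewise over the sup-norm ball.
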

The proof of this result follows directly from~\eqref{eq:pcf_proj_DPP} and~\eqref{eq:Ripley2}. 
Focusing on examples presented in the previous sections, we have
\begin{changemargin}{-.5cm}{0cm}
\[
	R_{\XX_I}(r) = \begin{cases}
		1 - \kappa_2^{d-\iota} \left( \dint_0^1 \e{-2t^2r^2/\alpha^2} \de t\right)^{\iota} & \text{for a Gaussian DPP,}\\[2ex]
		1 - \kappa_1^{d-\iota}\left( \dint_0^1 \e{-2tr/\alpha} \mathrm\de t\right)^{\iota} &\text{for an $L^1$-Exponential DPP,}\\[2ex]
	1-\dfrac{1}{N} \dprod_{i\in I} \sum_{\abs{j}< n_i}\left(1-\frac{\abs{j}}{n_i}\right) \mathrm{sinc}(2\pi jr) & \text{for an $(N,d)$-Dirichlet DPP}
	\end{cases}
\]
\end{changemargin}
where $\kappa_2$ and $\kappa_1$
are defined by~\eqref{eq:kappa2} and \eqref{eq:kappa1}, respectively and $\mathrm{sinc}$ is the cardinal sine function.

Figures~\ref{fig:Ripley_6d}-\ref{fig:Ripley_100d} investigate the situation for $d=6,10,100$ respectively. Ripley's functions for point processes $\XX_I$ based on the three models exposed in this section are depicted. The intensity is set to  $\rho_\XX=500$ and $\iota=d-i$ for $i=0,\dots,5$. 
The Gaussian DPP and $L^1$-Exponential DPP satisfy~\eqref{hyp:sep_id_kern0}, and so we decide, without loss of generality, to discard the last coordinates to define the projections. Since the $(N,d)$-Dirichlet DPP satisfies only~\eqref{hyp:sep_id_kern}, the choice of directions has an influence. For this process, Ripley's functions have been computed using a Monte-Carlo approach (based on $10^4$ replications): the coordinates to be removed are randomly chosen. The plots for the $(N,d)$-Dirichlet DPPs represent therefore the empirical mean of Ripley's functions. First and third quartiles are also represented by envelops to get an idea of the variability. The visual results show that for $\rho_\XX=500$, the $(N,d)$-Dirichlet DPP is the most repulsive among the three models. Moreover, the loss of repulsiveness when projecting turns out to be smaller for $(N,d)$-Dirichlet DPPs than for the two other DPP models. The envelops reported for the $(N,d)$-Dirichlet should be taken with attention. We could be tempted to conclude that the quite high variability observed for $d=6,10$, is too important to get practical interesting results. However, Section~\ref{sec:app} will discredit this argument. 

The $(N,d)$-Dirichlet DPP is the most repulsive in the situations considered here. However, it is worth mentioning that it may behave very badly according to the value of $N$. For example, we have observed that the less $N$ has factors the less repulsive the $(N,d)$-Dirichlet DPP. The values of these factors also affect the repulsiveness of the DPP. In particular, if $N$ is a high prime number, both situations are encountered which yields a disastrous model in terms of repulsion. 
Figures~\ref{fig:Ripley_6d}-\ref{fig:Ripley_100d} underline that the class of $L^1$-Exponential DPP is definitely less interesting than the class of Gaussian DPP. Given an~$\iota$, Ripley's function is closer to 1 and the convergence to 1 when $\iota$ decreases is faster for $L^1$-Exponential DPP. \rev{For this reason, the $L^1$-Exponential DPP is not considered in the next section.}

\begin{figure}
	\begin{changemargin}{-1.5cm}{0cm}
		\centering
		\includegraphics[scale=.225]{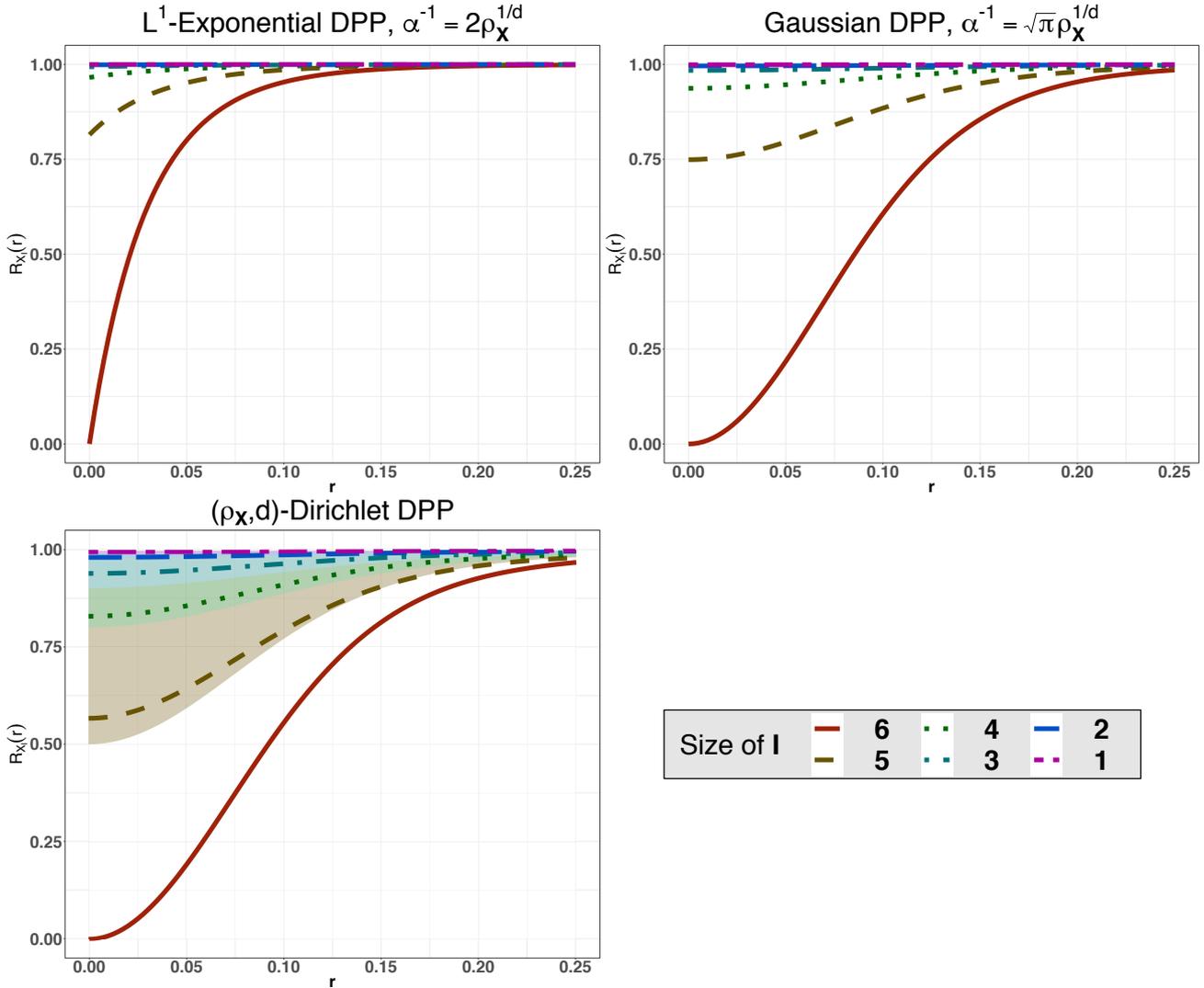}
		\end{changemargin}
	\caption{\rev{Ripley's functions (see~\eqref{eq:Ripley2}) of a $6$-dimensional DPP $\XX$ with intensity $\rho_\XX$ (solid lines) and its successive projections $\XX_I$ ($\abs{I}=d-1,\ldots$; dotted and dashed lines) for the $L^1$-Exponential DPP (top-left), Gaussian DPP (top right) and the $(N,6)$-Dirichlet DPP (bottom left). 
	For the Dirichlet case, coordinates to be removed are chosen randomly ($10^4$ replications): dotted and dashed lines represent empirical means while first and third quartiles are represented by envelops.}}
	\label{fig:Ripley_6d}
	\end{figure}
	
	\begin{figure}
	\begin{changemargin}{-1.5cm}{0cm}
		\centering
		\includegraphics[scale=.225]{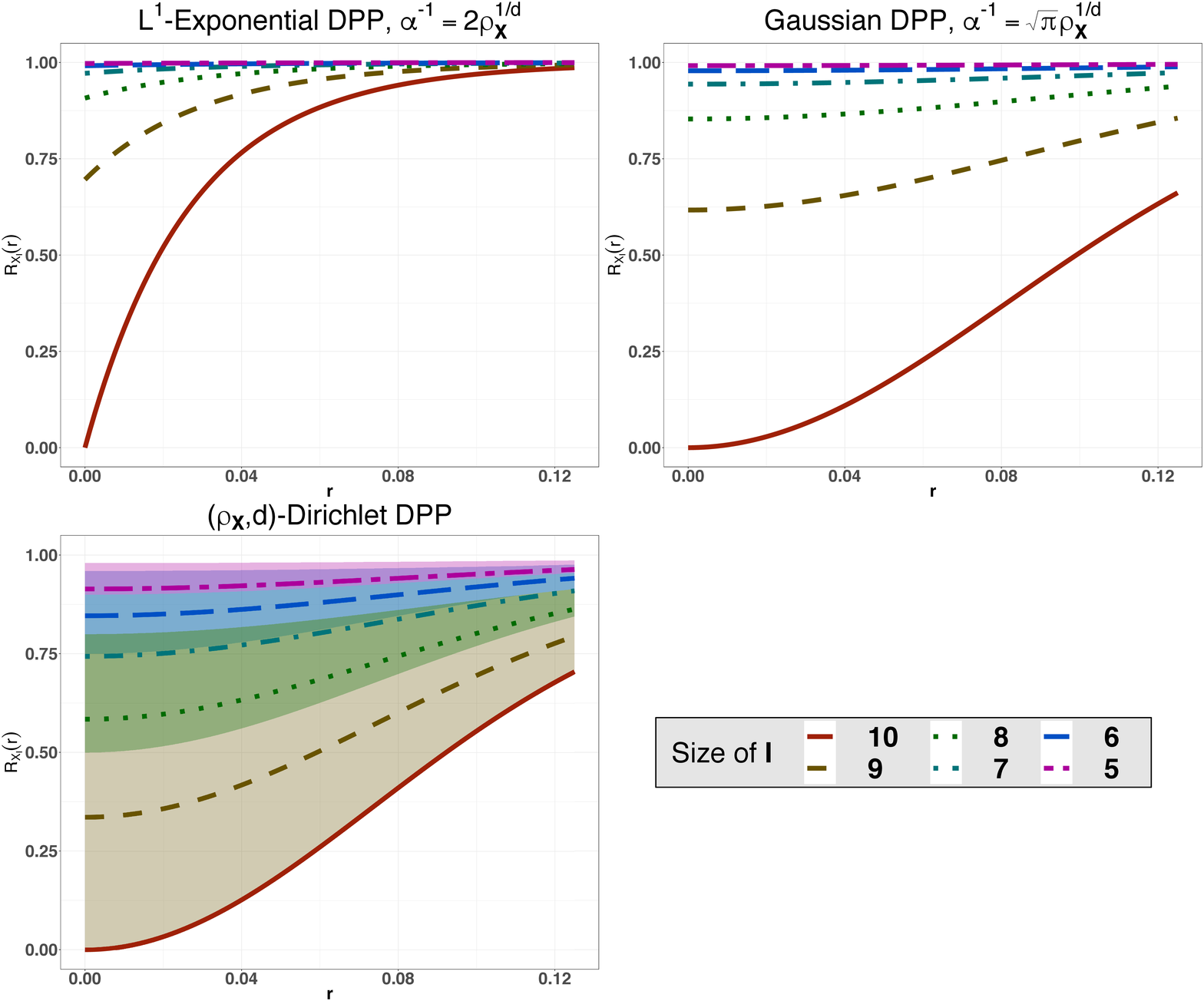}
		\end{changemargin}
	\caption{\rev{Ripley's functions (see~\eqref{eq:Ripley2}) of a $10$-dimensional DPP $\XX$ with intensity $\rho_\XX$ (solid lines) and its successive projections $\XX_I$ ($\abs{I}=d-1,\ldots$; dotted and dashed lines) for the $L^1$-Exponential DPP (top-left), Gaussian DPP (top right) and the $(N,10)$-Dirichlet DPP (bottom left). 
	For the Dirichlet case, coordinates to be removed are chosen randomly ($10^4$ replications): dotted and dashed lines represent empirical means while first and third quartiles are represented by envelops.}}
	\label{fig:Ripley_10d}
	\end{figure}
	
	\begin{figure}
	\begin{changemargin}{-1.5cm}{0cm}
		\centering
		\includegraphics[scale=.225]{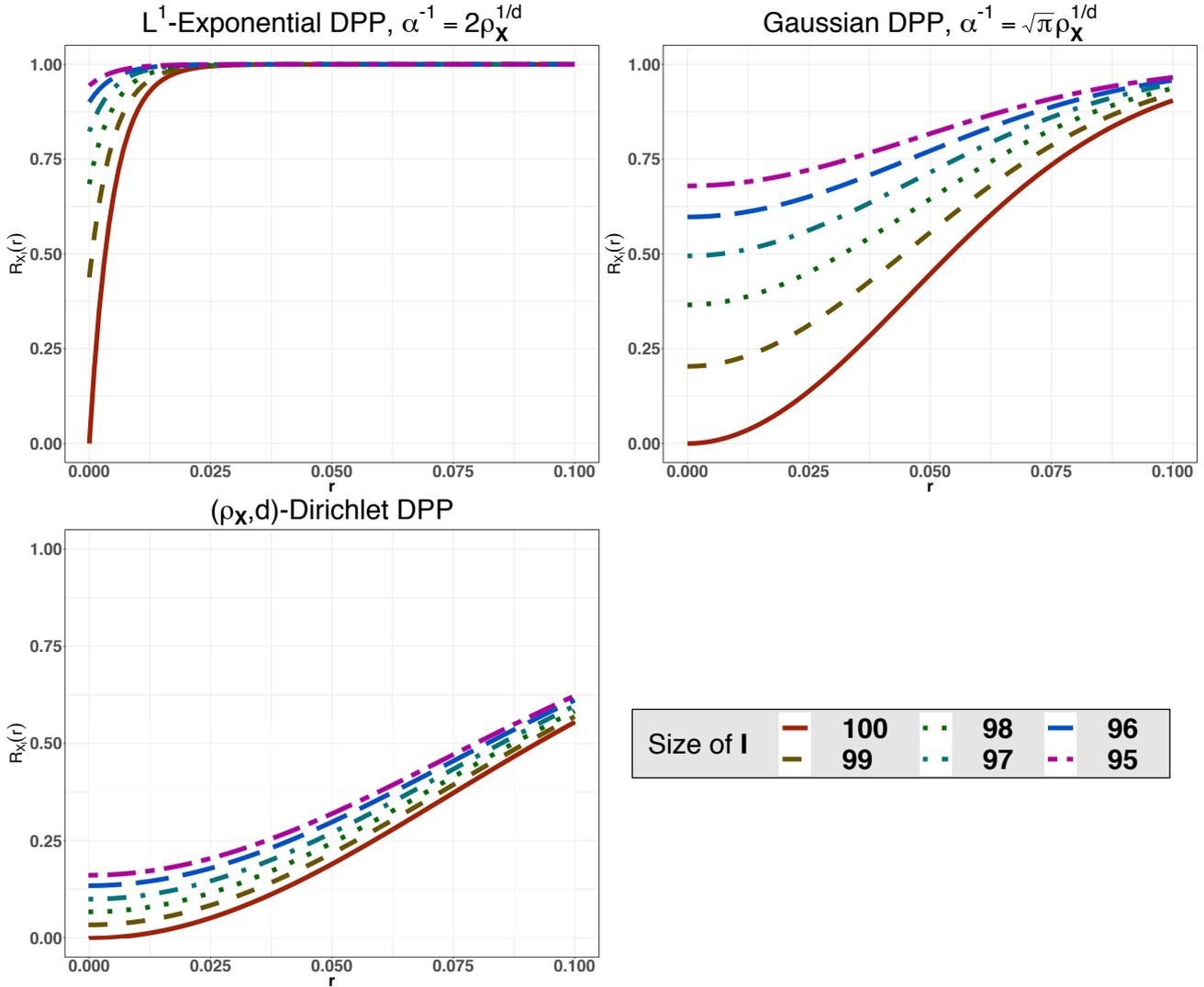}
		\end{changemargin}
	\caption{
	\rev{Ripley's functions (see~\eqref{eq:Ripley2}) of a $100$-dimensional DPP $\XX$ with intensity $\rho_\XX$ (solid lines) and its successive projections $\XX_I$ ($\abs{I}=d-1,\ldots$; dotted and dashed lines) for the $L^1$-Exponential DPP (top-left), Gaussian DPP (top right) and the $(N,100)$-Dirichlet DPP (bottom left). 
	For the Dirichlet case, coordinates to be removed are chosen randomly ($10^4$ replications): dotted and dashed lines represent empirical means while first and third quartiles are represented by envelops.}
	}
	\label{fig:Ripley_100d}
	\end{figure}

\section{Numerical illustrations} \label{sec:app}

In this section, \rev{we illustrate the interest of projected DPP models by simulation experiments}. For some  $d\ge 1$ and $I\subseteq \overline d$, the problem we consider is to estimate using a Monte-Carlo approach, an integral of the form
\[
 	\mu(f_I) = \int_{\rev{\I{0}{1}^d}} f_I(u) \de u
 \] 
 where $f_I:\rev{\I{0}{1}^{\iota}} \to \RR^+$ is a $\iota$-dimensional function. A standard way for achieving this task (which includes the uniform sampling design) is to define a point process, say $\mathbf Z_I$, on $\rev{\I{0}{1}^\iota}$ and estimate $\mu(f_I)$ using the unbiased estimator
 \begin{equation} \label{eq:MCM_est}
\widehat{\mu}_{\ZZZ_I}(f_I) = \rho_{\ZZZ_I}^{-1}\sum_{u\in\ZZZ_I} f_I(u).
\end{equation}
Given $I$ and $f_I$, this problem has been widely considered in the literature \cite[e.g.][]{RobertCasella04, DelyonPortier16}. In particular, an ad-hoc DPP on $\rev{\I{0}{1}^\iota}$, for which very interesting asymptotic results have been shown, has been proposed in \cite{BardenetHardy20}. In this section, we investigate another aspect. We consider the problem not only for one but various integrals, defined for different subsets $I \subseteq \overline d$ and based on a single realization of a point process defined on \rev{$\I{0}{1}^d$}. This problem, for which investigated models are definitely meaningful, mimics problems encountered in computer experiments where the spatial design is initially defined on $\RR^d$ but later used with a few coordinates  discarded \cite[e.g.][]{WoodsLewis16, Kleijnen17}.

To do this, we therefore consider a spatial point process $\XX$ (and in particular DPP models developed in the previous \rev{section}) and we estimate $\mu(f_I)$ by \eqref{eq:MCM_est} with $\ZZZ_I=\XX_I$ where $\XX_I$ is the projected point pattern of $\XX$ on $\rev{\I{0}{1}^\iota}$. The interest of our models lies in the following equation which evaluates $\mathrm{Var}(\widehat{\mu}_{\XX_I}(f_I))$. Using Campbell Theorem~\eqref{eq:rhok_def}
\begin{align} \label{eq:varI}
\mathrm{Var} \left( \widehat{\mu}_{\XX_I}(f_I) \right) &  = \rho_{\XX_I}^{-1} \int_{\rev{\I{0}{1}^\iota}} f_I(u)^2 \de u\\
& \quad+ \int_{\rev{\I{0}{1}^\iota}} \int_{\rev{\I{0}{1}^\iota}} (g_{\XX_I}(u,v)-1)f_I(u)f_I(v)\de u \de v.	 \nonumber
\end{align}
As soon as $g_{\XX_I}<1$, the variance is smaller than the first \rev{integral} which turns out to be the variance under the Poisson case. \rev{In this section, we intend to verify this property with models considered in this paper.}

In the following, we let $d=6$ and, \rev{following~\cite[Sec.~3]{BardenetHardy20},} we consider for any $I\subseteq \overline 6$ the ``bump'' test function
\begin{equation}\label{eq:fI}
f_I(u) = \exp\left(-\sum_{i\in I}\rev{ \frac{1}{1-4(u_i-1/2)^2}  }\right),\quad u\in \rev{\I{0}{1}^\iota}.
\end{equation}
Three type of models are investigated: a homogeneous Poisson point process (which serves as a reference), a Gaussian DPP,
and an $(N,6)$-Dirichlet DPP. Simulations of DPPs can be realized using \texttt{R} package \texttt{spatstat}. \rev{However, using this package leads to performance issues (in particular in terms of memory) when simulating DPPs with high intensity and/or high dimension. Therefore, we have implemented the simulation algorithms in C++ and have made them usable with R.}
 The codes are available on GitHub (\url{https://github.com/AdriMaz/rcdpp/}).

\begin{figure}
	\begin{changemargin}{-2.5cm}{0cm}
	\centering
	\includegraphics[scale=.19]{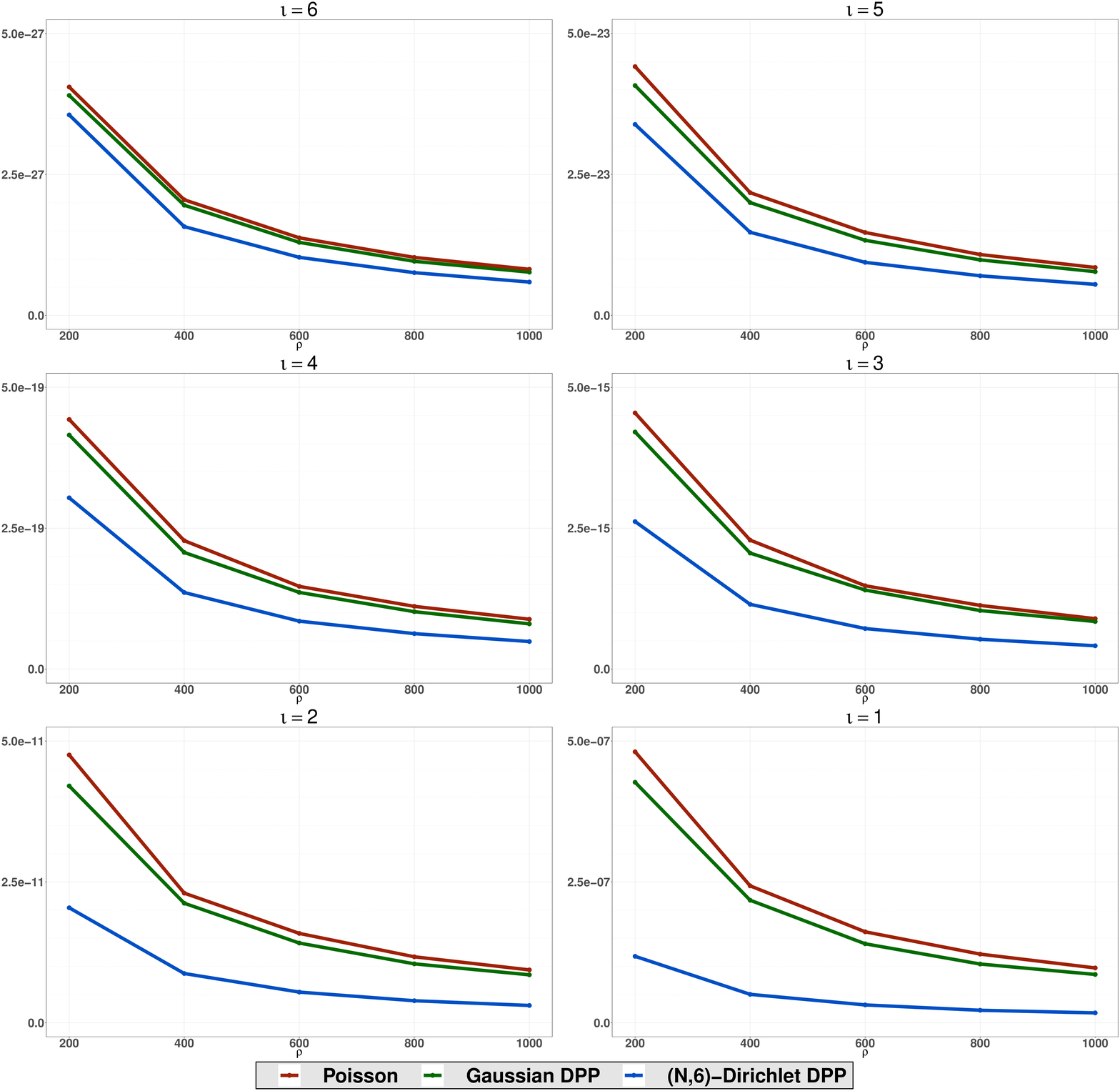}
		\end{changemargin}
		\caption{\rev{Empirical variances of Monte-Carlo integral estimates of the form~\eqref{eq:MCM_est} for the function \eqref{eq:fI} using Poisson process (red lines), Gaussian DPP (green lines) and Dirichlet DPP (blue lines) for $\iota=\abs{I}=6,\ldots, 1$, based on $10^4$ replications of a $6$-dimensional point processes with intensity \mbox{$\rho_\XX=200,400,600,800,1000$}. When $\iota<6$, coordinates to be removed are chosen randomly.}}
	\label{fig:MCM_resProj}
	\end{figure}
Figure~\ref{fig:MCM_resProj} reports empirical  variances of estimates of $\mu(f_I)$ based on \mbox{$m=10^4$} replications of each model, in terms of \rev{$\rho_\XX$ where $\rho_\XX=200,400,600,800,1000$}. We consider all possible projections, i.e. $\iota=6,5,4,3,2,1$. For the Poisson case, note that $\XX_I$ has the same distribution as a homogeneous Poisson point process (with the same intensity) defined on $B_I$.
For the Gaussian DPP, the parameter $\alpha$ is set to $\alpha^{-1}=\sqrt{\pi} \rho^{1/6}$. 
When $\iota<d$, the coordinates to be discarded are chosen randomly. This has no influence for the Poisson, Gaussian DPP
since these models satisfy Assumption~\eqref{hyp:sep_id_kern0} but is important for the $(N,6)$-Dirichlet DPP. 

Figure~\ref{fig:MCM_resProj} illustrates the interest of this research. It is clear that whatever the dimension of the function to integrate, i.e. whatever $\iota=6,\dots,1$, the empirical variance of Monte-Carlo estimates using one single realization of a spatial point process defined in dimension $d$, is always smaller than in the independent case. 
The \mbox{$(N,d)$-Dirichlet} model outperforms the \rev{Gaussian DPP} for any $I \subseteq \overline d$ \rev{as already observed from a theoretical point of view in the previous section}. The general result of this paper states that a projected DPP seems less and less repulsive after successive projections.  However, It is interesting to observe that this fact does not affect that much the properties of Monte-Carlo integration estimates.

\section*{Conclusion}

The objective of this paper is to explore properties of projections of a DPP $\XX$ with kernel $K$ and defined on a compact set $B$ of $\RR^d$. For any $I\subset \{1,\dots,d\}$, our general conclusion is that the projection $\XX_I$ remains repulsive when  kernel $K$ is separable, \rev{in the sense that $g_{\XX_I}<1$ uniformly for any $I\subset\{1,\dots,d\}$.} 
\rev{In particular if  kernel $K$ is a projection kernel,  $\XX_I$ falls in the class of $\alpha$-DPPs (with $\alpha=-1/n, n\in\mathbb{N}  	$). }}
We have proposed a few examples \rev{of such separable kernels}
and compared them using an original summary statistic based on a normalized version of the Ripley's function defined with the \rev{sup} norm. We have finally illustrated this paper for Monte-Carlo integration problems when the problem is to estimate integrals over a compact set $B_I$ of an $\iota$-dimensional function for any $1\le \iota \le d$, using the same quadrature points defined in $B$. 
\rev{To be fully relevant, comparisons with designs built from other classes of point processes (e.g. Gibbs, Mat\'ern, Multivariate OP Ensembles), more standard designs (e.g. Halton, Sobol, Quasi-Monte-Carlo), in which other test functions with different properties (e.g. less regular or non-compactly supported) would be considered, should be performed. We leave this for a future research.}

\section*{Acknowledgements} 

The authors would like to thank Fr\'ed\'eric Lavancier and Arnaud Poinas for fruitful discussions, \rev{and the associate editor and reviewers for valuable suggestions and comments}. The research of J.-F. Coeurjolly and A. Mazoyer is supported by the Natural Sciences and Engineering Research Council. P.-O. Amblard is partially funded by  Grenoble Data Institute  (ANR-15-IDEX-02) and LIA CNRS/Melbourne Univ Geodesic.


\printbibliography

\newpage


\appendix

\section{Proof of Lemma~\ref{lem:XI}} \label{app:proof_XI}
\begin{proof}
For any  non-negative measurable function $h_I:B_I^k\to \RR^+$, we have using Campbell Theorem \eqref{eq:rhok_def}
\begin{align*}
\int_{B_I^k} &h_I\left(x_I^{(1)},\ldots,x_I^{(k)}\right)  \rho_{\XX_I}^{(k)}\left(x_I^{(1)}, \ldots , x_I^{(k)}\right) \de x_I^{(1)}\ldots\de x_I^{(k)} \\
&= \EE\left[\sum_{x_I^{(1)}, \ldots , x_I^{(k)} \in \XX_I}^{\neq} h_I\left(x_I^{(1)}, \ldots , x_I^{(k)}\right)\right] \\
&= \EE\left[\sum_{x^{(1)}, \ldots , x^{(k)} \in \XX}^{\neq} (h_I \circ P_I)\left(x^{(1)}, \ldots , x^{(k)}\right)\right] \\
&= \int_{B_I^k} h_I \left(x_I^{(1)},\ldots,x_I^{(k)}\right)  
\bigg\{
\int_{(B_{I^c})^k} \rho_{\XX}^{(k)} \left(\left(x^{(1)},u^{(1)}\right),\ldots,\left(x^{(k)},u^{(k)}\right)\right) \\
& \qquad \qquad \de u^{(1)}\ldots\de u^{(k)}
\bigg\} \de x_I^{(1)}\ldots\de x_I^{(k)}
\end{align*}
whereby we deduce \eqref{eq:rhok_proj} by identification. 
\end{proof}

\section{Proof of Theorem~\ref{theo:proj_DPP_sep_kern}}  \label{app:proof_proj_DPP_sep_kern}

\begin{proof}
Let us write~\eqref{eq:rhok_proj_DPP1} under \eqref{hyp:sep_kern}.
	\begin{align} \label{eq:rhok_proj_DPP_sep_kern2}
		\rho_{\XX_I}^{(k)}\left(x^{(1)},\ldots,x^{(k)}\right)  & = \sum_{\sigma\in S_k} (-1)^{k-C(\sigma)} \int_{(B_{I^c})^k}\prod_{i=1}^k K\left((x,u)^{(i)},(x,u)^{(\sigma(i))}\right) \de u^{(1)}\ldots\de u^{(k)} \nonumber\\
		& = \sum_{\sigma\in S_k} (-1)^{k-C(\sigma)} \prod_{i=1}^k K_I\left(x_I^{(i)},x_I^{(\sigma(i))}\right)\nonumber \\
		& \phantom{\sum_{\sigma\in S_k} \qquad}\times \int_{(B_{I^c})^k} \prod_{i=1}^k K_{I^c}(u^{(i)},u^{(\sigma(i))}) \de u^{(1)}\ldots\de u^{(k)}.
	\end{align}
		For any~$\sigma\in S_k$ let us denote by~$\supp(\sigma)$ its support:
		\[
		\supp(\sigma) = \{i\in \overline{k}\mbox{  s.t.  } \sigma(i)\neq i\},
		\]
		by~$c(\sigma)$ the number of elements of~$\supp(\sigma)$, by~$\mathcal{S}(\sigma)$ the set of disjoint cycles of~$\sigma$ with non-empty support and by~$C(\sigma)$ the number of disjoint cycles of~$\sigma$ (including those with empty support).
		Consider the case where~$C(\sigma)=1$ (i.e.~$\sigma$ is a circular permutation of~$\overline{k}$). Then the integral part in  \eqref{eq:rhok_proj_DPP_sep_kern2} can be written as
\begingroup
\allowdisplaybreaks
\begin{align*}
			&\int_{(B_{I^c})^k}\prod_{i=1}^k  K_{I^c}\left(u^{(i)},u^{(\sigma(i))}\right)\de u^{(1)}\ldots \de u^{(k)} \nonumber\\[2ex]
			& = \int_{(B_{I^c})^k} K_{I^c}\left(u^{(1)},u^{(\sigma(1))}\right)\ldots K_{I^c}\left(u^{(\sigma(1))},u^{(\sigma^2(1))}\right)\ldots \\
			& \qquad \dots K_{I^c}\left(u^{(k)},u^{(\sigma(k))}\right) \de u^{(1)}\ldots\de u^{(\sigma(1))}\ldots \de u^{(k)} \nonumber\\[2ex]
			& = \int_{(B_{I^c})^{k-1}}  K_{I^c}^{(2)}\left(u^{(1)},u^{(\sigma^2(1))}\right)\ldots\nonumber\\
			&\phantom{\int_{(B_{I^c})^k} }\ldots K_{I^c}\left(u^{(\sigma(1)-1)},u^{(\sigma(\sigma(1)-1))}\right)K_{I^c}\left(u^{(\sigma(1)+1)},u^{(\sigma(\sigma(1)+1))}\right)\ldots\nonumber\\
			&\phantom{\int_{(B_{I^c})^k} }\ldots  K_{I^c}\left(u^{(k)},u^{(\sigma(k))}\right)\de u^{(1)}\ldots\de u^{(\sigma(1)-1)}\de u^{(\sigma(1)+1)}\ldots \de u^{(k)} \nonumber\\
			&\vdots \nonumber\\
			& = \int_{(B_{I^c})^2}  K_{I^c}^{(k-1)}\left(u^{(1)},u^{(\sigma^{k-1}(1))}\right) K_{I^c}\left(u^{(\sigma^{k-1}(1))},u^{(\sigma^k(1))}\right) \de u^{(1)} \de u^{(\sigma^{k-1}(1))}  \nonumber\\[2ex]
			& = \tr_{B_{I^c}} \left(K_{I^c}^{(k)}\right).
		\end{align*}
\endgroup
		Assume now that $C(\sigma)>1$. Then $\sigma$ can be written as \begin{equation} \label{eq:dec_perm}
			\sigma = \left(\bigodot_{\varepsilon\in \mathcal{S}(\sigma)}\varepsilon\right)\odot \mathbf{i}_k(\sigma),
		\end{equation}
		where~$\mathbf{i}_k(\sigma)$ is the identity on~$\overline{k}\setminus\supp(\sigma)$, and~$\odot$ denotes the permutation product.
		Observe that \eqref{eq:dec_perm} implies 
		\begin{equation}
			C(\sigma) = \#S(\sigma)+k-c(\sigma).
		\end{equation}
		If~$1\in\supp(\sigma)$, there is only one permutation~$\varsigma\in\mathcal{S}(\sigma)$ such that~$1\in\supp(\varsigma)$. Therefore:
		\begin{align*}
			&\int_{(B_{I^c})^k} \prod_{i=1}^k  K_{I^c}\left(u^{(i)},u^{(\sigma(i))}\right)\de u^{(1)}\ldots \de u^{(k)} \\[2ex]
			&=\tr_{B_{I^c}}\left(K^{(c(\iota))}\right)\int_{(B_{I^c})^{k-c(\varsigma)}}\prod_{i\in \overline{k}\setminus \supp(\varsigma)}K_{I^c}\left(u^{(i)},u^{(\sigma(i))}\right)\de u^{(1)}\ldots \de u^{(k)}
		\end{align*}
		Denote by~$\alpha$ the minimum of~$\overline{k}\setminus\supp(\varsigma)$. As above there is only one permutation $\varsigma'\in\mathcal{S}(\sigma)$ such that~$\alpha\in\supp(\varsigma')$. Then:
\begingroup
\allowdisplaybreaks
		\begin{align*}
			&\int_{(B_{I^c})^k} \prod_{i=1}^k  K_{I^c}\left(u^{(i)},u^{(\sigma(i))}\right)\de u^{(1)}\ldots \de u^{(k)} \\[2ex]
			&=\tr_{B_{I^c}}\left(K_{I^c}^{(c(\varsigma))}\right)\int_{(B_{I^c})^{k-c(\varsigma)}}\prod_{i\in \overline{k}\setminus \supp(\varsigma)}K_j\left(u^{(i)},u^{(\sigma(i))}\right)\de u^{(1)}\ldots \de u^{(k)}\\[2ex]
			&=\tr_{B_{I^c}}\left(K_{I^c}^{(c(\varsigma))}\right)\tr_{B_j}\left(K_{I^c}^{(c(\varsigma'))}\right)\\
			& \;\times\int_{(B_{I^c})^{k-(c(\varsigma)+c(\varsigma')}}\prod_{i\in \overline{k}\setminus \left(\supp(\varsigma)\cup\supp(\varsigma')\right)}K_{I^c}\left(u^{(i)},u^{(\sigma(i))}\right)\de u^{(1)}\ldots \de u^{(k)}.
		\end{align*}
\endgroup
		Therefore, one gets by induction:
		\begin{align}\label{eq:int_Kperp_gen}
			&\int_{(B_{I^c})^k} \prod_{i=1}^k  K_{I^c}\left(u^{(i)},u^{(\sigma(i))}\right)\de u^{(1)}\ldots \de u^{(k)}\nonumber \\[2ex]
			& = \left[\prod_{\varepsilon\in\mathcal{S}(\sigma)}\tr_{B_{I^c}}\left(K_{I^c}^{(c(\varepsilon))}\right)\right]\int_{(B_{I^c})^{k-c(\sigma)}} \prod_{i\in\overline{k}\setminus\supp(\sigma)}K_{I^c}\left(u^{(i)},u^{(\sigma(i))}\right)\de u^{(1)}\ldots\de u^{(k)}.
			\end{align}

		Plugging~\eqref{eq:int_Kperp_gen} into \eqref{eq:rhok_proj_DPP_sep_kern2} leads to \eqref{eq:rhok_proj_DPP_sep_kern_res}.

\end{proof}

\section{Laplace functionals} \label{sec:laplace}

\begin{lemma}\label{lem:laplace}
 Let $I\subset\overline{d}$ and let $\XX$ be a spatial point process defined on a compact set $B\subset \RR^d$. Then, for any Borel function~$h_I:B_I\rightarrow\RR^+$
\begin{equation}
	\label{eq:Laplce_proj}
\LL_{\XX_I}(h_I) = \LL_\XX(h_I \circ P_I).
\end{equation} 
\end{lemma}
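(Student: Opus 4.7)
The plan is to unwind the definition of the Laplace functional and exploit that the projection $P_I$ commutes with summation over the points of $\XX$. Recall that for a point process $\ZZZ$ on a space $E$ and a non-negative measurable function $h$, the Laplace functional is
$$\LL_{\ZZZ}(h) = \EE\left[\exp\left(-\sum_{z\in \ZZZ} h(z)\right)\right],$$
and, by construction, $\XX_I = P_I \XX$ is the configuration $\{P_I x : x\in \XX\}$, or equivalently the pushforward counting measure $\sum_{x\in \XX}\delta_{P_I x}$ on $B_I$.

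First I would write
$$\LL_{\XX_I}(h_I) = \EE\left[\exp\left(-\sum_{y\in \XX_I} h_I(y)\right)\right],$$
then re-index the inner sum as
$$\sum_{y\in \XX_I} h_I(y) = \sum_{x\in \XX} h_I(P_I x) = \sum_{x\in \XX}(h_I\circ P_I)(x),$$
and finally recognize the resulting expectation as $\LL_\XX(h_I\circ P_I)$, yielding~\eqref{eq:Laplce_proj}.

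The only delicate point, and the main (mild) obstacle, is the pathwise equality $\sum_{y\in \XX_I} h_I(y) = \sum_{x\in \XX}(h_I\circ P_I)(x)$. Viewing $\XX_I$ as the pushforward counting measure of $\XX$ under $P_I$, this is nothing but the change-of-variables formula for integration against a discrete measure and holds without further assumption. Alternatively, seeing $\XX_I$ as a set, one invokes simplicity of $\XX$ together with the absolute continuity of its law (as is the case for the DPPs studied in this paper, under the continuity assumption on $K$) to deduce that $P_I$ is almost surely injective on $\XX$, so that the two sums have identical terms. Either way, no regularity of $h_I$ beyond non-negative measurability is required, and~\eqref{eq:Laplce_proj} follows immediately.
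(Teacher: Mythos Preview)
Your proof is correct and follows essentially the same approach as the paper: unwind the definition of the Laplace functional, rewrite the sum (equivalently, product) over $\XX_I$ as one over $\XX$ via $h_I\circ P_I$, and identify the result. In fact you are slightly more careful than the paper, which simply asserts the middle equality without comment; your remark that this is immediate when $\XX_I$ is read as the pushforward counting measure (and otherwise requires almost sure injectivity of $P_I$ on the points of $\XX$) is a welcome clarification.
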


\begin{proof}
Equation~\eqref{eq:Laplce_proj} follows  arguments similar to the ones used in the proof of Lemma~\ref{lem:XI}.
\[
\LL_{\XX_I}(h_I)  = \EE\left[\prod_{y\in \XX_I} \e{-h_I(y)}\right] 
= \EE\left[\prod_{x\in\XX} \e{-h_I(x_I)}\right]
= \LL_{\XX}(h_I\circ P_I)	
\]
\end{proof}

\begin{theorem} \label{thm:laplace}
Let $I\subseteq\overline{d}$ and $\XX\sim \DPP_B(K)$ such that $K$ satisfies \eqref{hyp:sep_kern}. The Laplace functional of the projected point process $\XX_I$ is given for any Borel function $h_I:B_I\rightarrow \RR^+$ by:
\begin{align}
\LL_{\XX_I}(h_I) 
&= \prod_{l\in \NN_{I^c}} \exp\left\{-\sum_{k\ge1}\frac{\tr_{B_I}\left({K}_{\lambda_l^{(I^c)}K_I,h_I}^{(k)}\right)}{k} \right\} \label{eq:lap_proj_DPP_sep_kern_res1} \\[2ex]
&= \exp\left\{-\sum_{k\ge1}\frac{\tr_{B_{I^c}}\left(K_{I^c}^{(k)}\right) \tr_{B_I}\left({K}_{I,h_I}^{(k)}\right)}{k} \right\} \label{eq:lap_proj_DPP_sep_kern_res2}
\end{align}
where  $K_{I,h_I}:B_I\times B_I\rightarrow\CC$ is the kernel defined by
\[
K_{I,h_I}(x,y) = \sqrt{1-\e{-h_I(x)}} K_I(x,y) \sqrt{1-\e{-h_I(y)}}.
\]
\end{theorem}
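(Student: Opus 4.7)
\textbf{Proof plan for Theorem~\ref{thm:laplace}.} The plan is to reduce everything to the well-known Fredholm-determinant formula for the Laplace functional of a DPP and then exploit the tensor-product structure imposed by~\eqref{hyp:sep_kern}. First I would apply Lemma~\ref{lem:laplace} with $\tilde h(x) := h_I(x_I) = (h_I\circ P_I)(x)$ to write $\LL_{\XX_I}(h_I) = \LL_{\XX}(\tilde h)$. Since $\XX\sim\DPP_B(K)$ with $K$ a continuous, trace-class covariance having eigenvalues in $[0,1]$, the Laplace functional is given by
\[
\LL_{\XX}(\tilde h) = \det_{L^2(B)}\bigl(I - K_{\tilde h}\bigr) = \exp\!\left(-\sum_{k\ge 1}\frac{\tr_B\bigl(K_{\tilde h}^{(k)}\bigr)}{k}\right),
\]
where, as in the theorem, $K_{\tilde h}(x,y)=\sqrt{1-\e{-\tilde h(x)}}K(x,y)\sqrt{1-\e{-\tilde h(y)}}$. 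The second equality is the standard log-det expansion for trace-class operators with spectral radius strictly less than one; the boundary case (spectral radius exactly one) is handled, as usual, by replacing $h_I$ by $h_I+\varepsilon$ and letting $\varepsilon\downarrow 0$ using monotone convergence in $\LL_\XX$.

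Next I would exploit~\eqref{hyp:sep_kern}. Because $\tilde h$ depends only on $x_I$, one checks directly that
\[
K_{\tilde h}(x,y) = K_{I,h_I}(x_I,y_I)\,K_{I^c}(x_{I^c},y_{I^c}),
\]
i.e.~$K_{\tilde h}$ is again a separable product. A simple induction on $k$ using Fubini shows that iterated kernels of such separable products factor:
\[
K_{\tilde h}^{(k)}(x,y) = K_{I,h_I}^{(k)}(x_I,y_I)\, K_{I^c}^{(k)}(x_{I^c},y_{I^c}),
\]
so that evaluating at $x=y$ and integrating over $B=B_I\times B_{I^c}$ yields
\[
\tr_B\bigl(K_{\tilde h}^{(k)}\bigr) = \tr_{B_I}\bigl(K_{I,h_I}^{(k)}\bigr)\,\tr_{B_{I^c}}\bigl(K_{I^c}^{(k)}\bigr).
\]
Plugging this identity into the log-det expansion of Step~1 gives exactly formula~\eqref{eq:lap_proj_DPP_sep_kern_res2}.

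Finally, to obtain~\eqref{eq:lap_proj_DPP_sep_kern_res1}, I would insert the Mercer decomposition of $K_{I^c}$, which yields $\tr_{B_{I^c}}(K_{I^c}^{(k)}) = \sum_{l\in\NN_{I^c}}(\lambda_l^{(I^c)})^k$. Noting that $K_{\lambda_l^{(I^c)}K_I,\,h_I}^{(k)} = (\lambda_l^{(I^c)})^k\,K_{I,h_I}^{(k)}$ by definition of the symmetrized kernel and by linearity of iteration, one obtains
\[
\sum_{k\ge1}\frac{\tr_B\bigl(K_{\tilde h}^{(k)}\bigr)}{k} = \sum_{l\in\NN_{I^c}}\sum_{k\ge1}\frac{\tr_{B_I}\bigl(K_{\lambda_l^{(I^c)}K_I,\,h_I}^{(k)}\bigr)}{k},
\]
after swapping the order of summation (justified by non-negativity of all terms, all eigenvalues lying in $[0,1]$, and the trace-class assumption). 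Exponentiating turns the outer sum into a product and gives~\eqref{eq:lap_proj_DPP_sep_kern_res1}. The main obstacle I anticipate is the rigorous justification of the log-det expansion and the sum swap in the borderline spectral regime (eigenvalues possibly equal to $1$); this is a routine regularization argument but requires some care to be self-contained. The purely algebraic core, namely the factorization of iterated separable kernels and hence of their traces, is short and the real engine of the proof.
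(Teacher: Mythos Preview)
Your proposal is correct and follows exactly the route the paper indicates: apply Lemma~\ref{lem:laplace}, use the standard Fredholm-determinant expansion of the DPP Laplace functional, then exploit the separability~\eqref{hyp:sep_kern} to factor iterated kernels and their traces. The paper's own proof is a one-line remark (``straightforward from Lemma~\ref{lem:laplace}, assumption~\eqref{hyp:sep_kern} and the definition of $K_{I,h_I}$''), so you have simply spelled out the details it leaves implicit.
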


\begin{proof}
The proof is straightforward and follows from Lemma~\ref{lem:laplace}, assumption~\eqref{hyp:sep_kern} and the definition of $K_{I,h_I}$.
\end{proof}

\end{document}